\documentclass[11pt]{amsart}

\usepackage{amsfonts, amstext, amsmath, amsthm, amscd, amssymb}

\usepackage{graphicx, color}

\usepackage[margin=2.5cm]{geometry}

\usepackage{microtype}
\usepackage{tikz}
\usetikzlibrary{arrows.meta,calc,decorations.pathmorphing}
\usepackage{caption}

\usepackage[hidelinks,pagebackref,pdftex]{hyperref}

\renewcommand{\setminus}{{\smallsetminus}}

\newtheorem{theorem}{Theorem}[section]
\newtheorem{proposition}[theorem]{Proposition}
\newtheorem{lemma}[theorem]{Lemma}

\newtheorem*{namedtheorem}{\theoremname}
\newcommand{\theoremname}{testing}
\newenvironment{named}[1]{\renewcommand{\theoremname}{#1}\begin{namedtheorem}}{\end{namedtheorem}}

\theoremstyle{definition}
\newtheorem{definition}[theorem]{Definition}

\newtheorem{remark}[theorem]{Remark}

\title[Satellite Links with Full Twists and Single-Twist Companions]{Satellite Links with Multiple Full Twists and Single-Twist Companions}

\author{Thiago de Paiva}
\address{(Thiago de Paiva) Beijing International Center for Mathematical Research, Peking University, Beijing 100871, China P.R.}
\email{thhiagodepaiva@gmail.com}
\author{Yi Liu}
\address{(Yi Liu) Beijing International Center for Mathematical Research, Peking University, Beijing 100871, China P.R.}
\email{liuyi@bicmr.pku.edu.cn}
\author{Paolo Piccione}
\address{(Paolo Piccione) Institute of Mathematics and Statistics,
University of S\~ao Paulo, 05508-090, S\~ao Paulo--SP, Brazil}
\address{(Paolo Piccione) Department of Mathematics, School of Sciences,
Great Bay University, 523000, Dongguan--GD, China P.R.}
\email{paolo.piccione@usp.br}

\begin{document}
\maketitle

\begin{abstract}
We study the relationship between the number of full twists in positive braid representations of satellite links and their companion links. We construct infinitely many satellite links that admit positive braid representations with arbitrarily many full twists, while their companion links do not admit any positive braid representation with more than one full twist. This exhibits an unexpected divergence between the braid-theoretic complexity of a satellite link and that of its companion.
\end{abstract}

% Put title, author, abstract at the front
\maketitle

\section{Introduction}

Satellite links provide a fundamental construction in knot theory, allowing one
to build new links from existing ones. Informally, a satellite link is obtained
by taking a link $C$ (called the \emph{companion}) and replacing a tubular
neighborhood of each component by a link embedded in a solid torus (called the
\emph{pattern}); see Section~\ref{S1} for the formal definition. This
construction produces links whose topology reflects a combination of the
geometry of the companion and the structure of the pattern.

Understanding how properties of a satellite link relate to those of its companion is a natural and widely studied problem. In particular, it is often useful to compare geometric or algebraic features of a satellite link with those of its companion.

For instance, in recent work, the complexity of link complements---measured by the minimal number of tetrahedra in a triangulation---is compared between satellite knots and their companions; see, for example, \cite{Crushing}. This provides a geometric perspective on how the structure of a satellite link reflects that of its companion.

Motivated by these considerations, in this paper we study an analogous problem in the braid-theoretic setting, comparing the number of full twists in positive braid representatives of a satellite link and those of its companion.

%Let $B_p$ denote the braid group on $p$ strands, generated by the standard Artin generators $\sigma_1,\dots,\sigma_{p-1}$. A braid is said to be \emph{positive} if it can be expressed using only positive powers of these generators. We say that a braid $B \in B_p$ is a \emph{positive braid with $k$ full twists} if it contains $k$ positive full twists on all $p$ strands as a subword. Equivalently, there exist a positive braid $B_0 \in B_p$ and an integer $k \ge 1$ such that
%\[
%B = B_0(\sigma_1 \sigma_2 \cdots \sigma_{p-1})^{kp}.
%\]

Let $B_p$ denote the braid group on $p$ strands, generated by the standard Artin generators $\sigma_1,\dots,\sigma_{p-1}$. A braid is said to be \emph{positive} if it can be expressed using only positive powers of these generators.

Let
\[
\Delta^2_p = (\sigma_1 \sigma_2 \cdots \sigma_{p-1})^p
\]
denote the positive full twist on all $p$ strands.

A positive braid $\beta \in B_p$ is said to \emph{admit $k$ full twists} if there exists a positive braid $\beta_0 \in B_p$ such that
\[
\beta = \beta_0 (\Delta^2_p)^k.
\]

In the setting of braid theory, positive braid representations and the number of full twists provide a natural measure of complexity. This leads to the question of whether the braid-theoretic complexity of a satellite link constrains that of its companion.

In this paper, we study this question by comparing the number of full twists in positive braid representations of a satellite link and its companion. At first glance, one might expect that if a satellite link admits a positive braid representation with many full twists, then its companion should exhibit similar behavior.

Our main result shows that this is not the case.

\begin{theorem}\label{main}
There exist infinitely many satellite links $L$ admitting positive braid representations with arbitrarily many full twists, such that their companion links do not admit any positive braid representation with more than one full twist.
\end{theorem}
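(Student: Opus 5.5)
A natural route is to take the companion to be a specific positive braid link with exactly one full twist. The satellite will be a cable-type pattern whose braid absorbs extra twisting.

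\textbf{Choice of companion.} Take $C$ to be the closure of a positive braid $\beta_C = \beta_1 \Delta^2_p \in B_p$, where $\beta_1$ is a positive braid that does not contain $\Delta^2_p$ as a factor. A concrete candidate is the torus link $T(p,p+1)$, or a twisted torus link $T(p,q;2,s)$ with $p<q<2p$. To see that $C$ admits no positive braid representation with two full twists, I would use an obstruction that is invariant under isotopy. Suppose $C$ were the closure of $\gamma(\Delta^2_m)^2$ with $\gamma$ positive in $B_m$. Then its Seifert genus (or the top degree of its HOMFLY polynomial) and its braid index are both forced, and positive braids realise the minimal genus $\chi = m - \ell(\gamma) - 2m(m-1)$. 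Combining this with the Franks--Williams--Morton bound, which is sharp for braids containing a full twist, one gets $m=p$. One then compares exponents: a positive braid on $p$ strands containing $(\Delta^2_p)^2$ has word length at least $2p(p-1)$. This length is fixed by the Euler characteristic, so it exceeds the length of $\beta_C$ whenever $\ell(\beta_1)<p(p-1)$. That gives a contradiction. Hence the companion is chosen so that $\ell(\beta_C) < 2p(p-1)$, and $\beta_1$ is nontrivial so that $C$ is not itself a torus link with more twists. Infinitely many examples come from letting $p$ vary.

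\textbf{Satellite construction.} For the satellite $L$, take the $(n,kn\cdot w+1)$-type cable pattern: in each component's solid torus, place $n$ parallel strands with $k$ full twists inserted. Concretely, replace each strand of $\beta_C$ by $n$ parallel strands, which is the standard braid cabling $\beta_C \mapsto \beta_C^{(n)}\in B_{pn}$. Then multiply by $(\Delta^2_n)^{N}$ inside the cabled region. Since $\Delta^2_p$ cables to $\Delta^2_{pn}$ times inverse local twists $\prod (\Delta^2_n)^{-1}$ on the blocks, choosing $N$ large makes the resulting word positive. I would then rewrite it, using the identity $\Delta^2_{pn} = (\text{cabled }\Delta^2_p)\cdot\prod_{\text{blocks}} \Delta^2_n$, so that it contains $(\Delta^2_{pn})^k$ for any prescribed $k$. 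To reach $k$ full twists one needs $k$ full twists in the companion. These are missing, so instead I would attach the extra twists via the pattern, picking the pattern to be itself a closed braid in the solid torus whose $pn$-strand realisation contains $\Delta^2_{pn}$ powers. Alternatively, use a satellite whose pattern winding is large and rely on the fact that twisting the pattern by $t$ meridional twists changes the satellite braid by $\Delta^2$ powers on the whole cable.

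\textbf{Main obstacle.} The difficulty is making the satellite braid contain many \emph{global} full twists $\Delta^2_{pn}$ while the companion contains only one. Global twists in the cable come essentially from global twists in the companion. So I expect to need a pattern with nonbraided or multi-component structure, or a choice of $L$ whose braid index is smaller than $pn$. In that case the positive braid representative of $L$ is not the naive cable, and $L$ is recognised as a satellite only after isotopy. This is where I would spend the effort: I would look for the relation through a torus-link identity, namely that the $(p,p+1)$ cable-type pattern yields $T(a,b)$-like closures with many twists. The companion-side obstruction, by contrast, I expect to be routine through genus and braid index.
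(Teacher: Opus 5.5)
Your companion-side obstruction is sound: for a positive braid containing a full twist, the braid index is the number of strands and $\chi=m-\ell$, so a second full twist would force $\ell(\beta_1)\ge p(p-1)$. But the proposal never produces the satellites $L$, and that construction is the substance of the theorem.

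The cabling steps fail as written. Twisting inside the pattern only contributes block twists $\Delta^2_n$, and the $n$-cable of $\beta_1\Delta^2_p$ contains exactly one global $\Delta^2_{pn}$. So no rewriting gives $(\Delta^2_{pn})^k$ with $k\ge 2$.

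Worse, for the companions you propose the goal is impossible, not just hard. Take $C$ a nontrivial knot such as $T(p,p+1)$. Its companion torus can be isotoped off the braid axis of $L$ by \cite[Theorem~5.3]{Ito2025}. A knotted torus does not split $S^3$ into two solid tori, so alternative~(1) of Theorem~\ref{theorem2} applies (this is the argument of \cite[Theorem~4.2]{Ito2025}). It shows that $C$ is itself a positive braid with the same $k$ full twists, which contradicts your own obstruction once $k\ge 2$. Your closing remark, that global twists in the satellite come from global twists in the companion, is exactly this phenomenon. It is what forces a different kind of companion.

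The missing idea is to hide the braid axis inside the companion. One needs an unknotted companion torus that separates the axis $A$ of $L$ from all the other companion tori. Then the corresponding component of $C$ is isotopic to $A$, and $C$ is a braid with many full twists together with its own axis, as in alternative~(2) of Theorem~\ref{theorem2}. When $C$ is written as a braid, the axis becomes an extra strand, and the twisting about it is no longer a global full twist.

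The paper realizes this with $L=T((a+b+c,c),(a+b+2c+k(a+b+c),a+b+c))$:
\begin{itemize}
\item By \cite[Theorem~3.9]{de2024lorenz}, $L$ is the $V$-link $V((a+b+c,\overline{c}),(a+b+c,(k+1)(a+b+c)+c))$. This lives on only $a+b+c$ strands and has $k+1$ full twists, in line with your guess that the useful representative of $L$ is not the naive one.
\item Tracking strands shows $L$ is an iterated braided satellite, first over $T((3,1),(4+3k,3))$ and then over $C=T((3,1),(3+2k,2))=V((2,2k),(3,3))$.
\item This $C$ is the closure of $\sigma_1^{2k}\Delta^2_3=\sigma_1^{2k+2}\sigma_2\sigma_1^2\sigma_2$, that is, $T(2,2k+2)$ together with its braid axis.
\end{itemize}

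The obstruction must change as well. Your criterion $\ell(\beta_1)<p(p-1)$ fails for this $C$ once $k\ge 3$, since $\ell(\sigma_1^{2k})=2k\ge 6$. The paper instead notes that $C$ has pairs of components with linking number $1$, whereas $r$ full twists force every pair of components to have linking number at least $r$.
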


The examples arise from families of T-links, realized as satellites whose patterns are given by braids in the solid torus. Moreover, in our construction, the satellite structure is compatible with the braid representatives, in the sense that the companion tori can be chosen disjoint from the braid axis (see Section~\ref{S2}).

This phenomenon highlights an unexpected discrepancy between the braid-theoretic complexity of a satellite link and that of its companion.

%We note that results in the literature relate positive braid representations of satellite links and their companions (see \cite[Theorem 4.2]{Ito2025}). In our construction, the satellite links admit positive braid representatives with arbitrarily many full twists that are compatible with the underlying satellite structure. In contrast, the corresponding companion links do not admit positive braid representations with more than one full twist.

We note that \cite[Theorem~4.2]{Ito2025} relates positive braid representations of satellite links and their companions, suggesting a close relationship between their braid-theoretic complexity. Our main result shows that this relationship does not persist in general. In our construction, the satellite links admit positive braid representatives with arbitrarily many full twists that are compatible with the underlying satellite structure, while the corresponding companion links do not admit positive braid representations with more than one full twist.

This suggests that the relationship between the number of full twists of a satellite link and that of its companion is more subtle than previously understood. We therefore develop a refined analysis of the structure of the companion link, leading to a corrected formulation describing precisely how full twists may or may not be reflected from a satellite link to its companion  (see Section~\ref{S3}).

The paper is organized as follows. In Section~\ref{S1} we recall the necessary definitions on satellite links. In Section~\ref{S2} we prove Theorem~\ref{main}. In Section~\ref{S3} we analyze the braid structure of the companion link and obtain a refined description of the phenomenon.

\section{Satellite Links, Companions, and Patterns}\label{S1}

We briefly review the satellite construction for links and introduce the terminology that will be used throughout the paper. All links are assumed to be oriented.

Let $C = C_1 \cup \cdots \cup C_m \subset S^3$ be an $m$-component link, and let
\[
N(C) = \bigsqcup_{i=1}^m N(C_i)
\]
be a choice of disjoint tubular neighborhoods of its components.

The satellite construction replaces each neighborhood $N(C_i)$ with a solid torus containing a prescribed link. More precisely, for each $i=1,\dots,m$, a \emph{pattern component} is a pair $P_i=(V_i,\ell_i)$, where $V_i \cong S^1 \times D^2$ is a solid torus and $\ell_i \subset V_i$ is an oriented link not contained in any $3$-ball in $V_i$. The pattern component $P_i$ is said to be \emph{trivial} if $\ell_i$ is isotopic to the core curve $S^1 \times \{0\}$ of $V_i$.

A \emph{pattern} for the companion link $C$ is a collection $P=(P_1,\dots,P_m)$, and it is called \emph{nontrivial} if at least one of its components is nontrivial.

For each $i$, fix a homeomorphism
\[
f_i : V_i \longrightarrow N(C_i)
\]
that sends the preferred longitude of $V_i$ to the preferred longitude of $C_i$, namely a simple closed curve on $\partial N(C_i)$ having linking number zero with $C_i$, equivalently one that is null-homologous in $S^3 \setminus N(C_i)$. The resulting link
\[
C_P = \bigcup_{i=1}^m f_i(\ell_i) \subset S^3
\]
is called the \emph{satellite} of $C$ with pattern $P$.

We shall always regard the satellite presentation as part of the data. In
particular, the solid tori \(N(C_i)\) are chosen pairwise disjoint, and each
component of \(C_P\) is contained in exactly one of them. Equivalently, for
each \(i\), the solid torus \(N(C_i)\) contains precisely the components of
\(C_P\) arising from the pattern component \(P_i\), and no companion torus
\(f_j(\partial V_j)\), \(j\neq i\), is contained in \(N(C_i)\).

The images $f_i(\partial V_i)$ are called the \emph{companion tori}. The link $C_P$ is said to be a \emph{satellite link} if, for every nontrivial pattern component $P_i$, the corresponding torus $f_i(\partial V_i)$ is essential in the complement of $C_P$.

Thus, when we say that a companion torus $f_i(\partial V_i)$ is essential in
the complement of $C_P$, we mean that it is incompressible and not
boundary-parallel in
\[
S^3 \setminus \operatorname{int} N(C_P).
\]

A pattern component $P_i$ is called \emph{braided} if $\ell_i$ is a closed braid in $V_i$, that is, if it intersects each meridional disk transversely and positively. A pattern is braided if all its components are braided.

Finally, a link $L \subset S^3$ is called a \emph{satellite link} (respectively, a \emph{braided satellite link}) if it can be expressed as $L=C_P$ for some companion link $C$ and some nontrivial pattern $P$ (respectively, braided pattern $P$).

\begin{lemma}\label{lem:essential-tori}
Let \(L=C_P\) be a satellite link, and let
\[
T_i=f_i(\partial V_i)
\]
be one of the companion tori. Put \(W_i=f_i(V_i)\). Suppose that the pattern
component \(P_i=(V_i,\ell_i)\) is nontrivial. Suppose moreover that the outside
of \(T_i\), namely
\[
S^3\setminus \operatorname{int}(W_i),
\]
contains at least two components of \(L\). If \(L\) is non-split, then \(T_i\)
is essential in the complement of \(L\).
\end{lemma}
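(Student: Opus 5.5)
The plan is to verify the two conditions in the definition of essential separately: incompressibility and not being boundary-parallel. Throughout, write $E=S^3\setminus\operatorname{int}N(L)$ and $X_i=S^3\setminus\operatorname{int}(W_i)$. Since $L$ is non-split, $E$ is irreducible: every embedded $2$-sphere in $E$ bounds a ball in $E$.

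\emph{Incompressibility.} Suppose $D\subset E$ is a compressing disk for $T_i$, with $\partial D$ essential on $T_i$. First I normalize $D$ so that it meets $T_i$ only in $\partial D$. By general position, $D\cap T_i$ consists of $\partial D$ together with finitely many circles in $\operatorname{int}D$. Take a circle of this intersection that is innermost in $D$; it bounds a subdisk $D'\subset D$ whose interior misses $T_i$.
\begin{itemize}
\item If $\partial D'$ is essential on $T_i$, replace $D$ by $D'$.
\item If $\partial D'$ is inessential on $T_i$, it bounds a disk $D''\subset T_i$. By irreducibility of $E$, the sphere $D'\cup D''$ bounds a ball in $E$, and an isotopy across this ball removes the circle (and possibly others).
\end{itemize}
Repeating this, we may assume $\operatorname{int}D$ lies in $\operatorname{int}W_i$ or in $\operatorname{int}X_i$.

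There are then two cases.
\begin{itemize}
\item \emph{$D$ lies in $W_i$.} Since $W_i$ is a solid torus, an essential curve on $\partial W_i$ bounding a disk in $W_i$ is a meridian, so $D$ is a meridian disk of $W_i$ disjoint from $f_i(\ell_i)$. Cutting $W_i$ along $D$ gives a $3$-ball containing $f_i(\ell_i)$. Pulling back by $f_i$, $\ell_i$ lies in a $3$-ball in $V_i$, which contradicts the definition of a pattern component.
\item \emph{$D$ lies in $X_i$.} Compressing $T_i$ along $D$ inside $X_i$ yields a $2$-sphere $S\subset X_i\setminus L$. One side of $S$ contains $W_i$, and hence the nonempty link $f_i(\ell_i)$. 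The other side is $X_i$ cut along $D$. Since $D\cap L=\emptyset$, this side contains all components of $L$ lying in $X_i$, of which there are at least two by hypothesis. Thus $S$ splits $L$, contradicting non-splitness.
\end{itemize}

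\emph{Not boundary-parallel.} Suppose $T_i$ is parallel in $E$ to a boundary torus $\partial N(K)$, for some component $K$ of $L$. Then $T_i$ and $\partial N(K)$ cobound a region $R\cong T^2\times I$ in $E$ containing no other components of $L$, and $R$ lies on one side of $T_i$.
\begin{itemize}
\item If $R\subset X_i$, then $X_i\setminus R=N(K)$, so $X_i$ contains the single component $K$ of $L$. This contradicts the hypothesis that $X_i$ contains at least two components of $L$.
\item If $R\subset W_i$, then $W_i=R\cup N(K)$, so $f_i(\ell_i)=K$ is a single component and $W_i$ is a collar of $N(K)$. Hence $K$ is isotopic in $W_i$ to the core of $W_i$. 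Pulling back by $f_i$, $\ell_i$ is isotopic to the core of $V_i$, contradicting nontriviality of $P_i$.
\end{itemize}

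The main point requiring care is the normalization at the start of the incompressibility argument: removing the inessential intersection circles uses exactly the irreducibility of $E$, which comes from non-splitness. After that, each case reduces directly to a hypothesis. The pattern condition handles compressions inside, non-splitness together with the two-component hypothesis handles compressions outside, and nontriviality together with the two-component hypothesis handles boundary-parallelism on the two sides.
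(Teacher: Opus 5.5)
Your proof is correct and follows the same route as the paper. Compressions inside $W_i$ are ruled out because $\ell_i$ does not lie in a ball, compressions outside by non-splitness, and boundary-parallelism by the two-component hypothesis on the outside and by nontriviality of $P_i$ on the inside. Your innermost-circle normalization, the meridian-disk argument and the product-region argument only supply details the paper leaves implicit, since the paper takes compressing disks with interior disjoint from $T_i$, as in the standard definition.
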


\begin{proof}
We show that \(T_i\) is incompressible and not boundary-parallel in
\(S^3\setminus L\).

First consider the inside of \(T_i\), namely \(W_i\setminus L\). Suppose that
\(T_i\) were compressible on the inside. Compressing \(T_i\) along such a disk
produces a sphere in \(W_i\) enclosing the pattern component \(f_i(\ell_i)\).
Hence \(f_i(\ell_i)\) would be contained in a \(3\)-ball in \(W_i\). Pulling
back by \(f_i\), this would imply that \(\ell_i\) is contained in a \(3\)-ball
in \(V_i\), contradicting the definition of satellite link. Thus
\(T_i\) is incompressible on the inside.

Now consider the outside of \(T_i\), namely
\[
S^3\setminus \operatorname{int}(W_i).
\]
Suppose that \(T_i\) were compressible on the outside. Compressing \(T_i\)
along an outside compressing disk produces a sphere separating the components
of \(L\) inside \(W_i\) from the components of \(L\) outside \(W_i\). Since both
sides contain components of \(L\), this sphere is a splitting sphere for \(L\).
This contradicts the assumption that \(L\) is non-split. Therefore \(T_i\) is
incompressible on the outside.

It remains to rule out boundary-parallelism. Suppose that \(T_i\) were
boundary-parallel in \(S^3\setminus L\). Since the outside of \(T_i\) contains
at least two components of \(L\), the torus \(T_i\) cannot be parallel through
the outside to the boundary of a tubular neighborhood of a single component of
\(L\). On the other hand, \(T_i\) cannot be boundary-parallel through the inside
either: such a parallelism would imply that the pattern component
\(f_i(\ell_i)\) is isotopic to the core of \(W_i\), contradicting the
nontriviality of \(P_i\).

Therefore \(T_i\) is incompressible and not boundary-parallel in
\(S^3\setminus L\). Hence \(T_i\) is essential.
\end{proof}

\section{Satellite Links with Full Twists and Single-Twist Companions}\label{S2}

In this section, we construct explicit families of satellite links whose braid-theoretic behavior differs significantly from that of their companion links. Our goal is to compare the number of full twists in positive braid representations of a satellite link and those of its companion. The examples we consider arise from families of T-links. We begin by identifying suitable satellite structures for these links and then analyze their braid representations in terms of full twists.

Given integers
\[
2 \le r_1 < \cdots < r_n \quad \text{and} \quad s_1,\dots,s_n > 0,
\]
the closure of the positive braid
\[
(\sigma_1 \cdots \sigma_{r_1-1})^{s_1}
(\sigma_1 \cdots \sigma_{r_2-1})^{s_2}
\cdots
(\sigma_1 \cdots \sigma_{r_n-1})^{s_n}
\]
defines a T-link, denoted
\[
T((r_1,s_1),\dots,(r_n,s_n)).
\]

\begin{definition}
Let \(p,q\) be positive integers with \(2\le p\le q\). Let
\[
2\le u_1<\cdots<u_m\le p
\quad\text{and}\quad
2\le r_1<\cdots<r_n<p
\]
be possibly empty sequences, and let
\[
v_1,\dots,v_m,s_1,\dots,s_n
\]
be positive integers. The corresponding \(V\)-braid is the braid on \(p\)
strands given by
\begin{align*}
&(\sigma_{p-1}\sigma_{p-2}\dots\sigma_{{p-u_1+1}})^{v_1}\dots(\sigma_{p-1}\sigma_{p-2}\dots\sigma_{{p-u_m+1}})^{v_m}(\sigma_1\sigma_2\dots\sigma_{r_1-1})^{s_1}\dots\\
&(\sigma_{1}\sigma_{2}\dots\sigma_{r_n-1})^{s_n}(\sigma_{1}\sigma_{2}\dots\sigma_{{p-1}})^{q}.
\end{align*}
where the corresponding product is omitted if one of the sequences is empty.
Its closure is called the \(V\)-link
\[
V((u_1,\overline{v_1}),\dots,(u_m,\overline{v_m}),
(r_1,s_1),\dots,(r_n,s_n),(p,q)).
\]
\end{definition}

Every V-link is represented by a positive braid with a full twist. Moreover, V-links correspond to the minimal braid representatives of T-links; see \cite[Theorem 3.9]{de2024lorenz}. 

\begin{proposition}\label{prop:T-links-nonsplit}
Every T-link is non-split.
\end{proposition}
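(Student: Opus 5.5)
The plan is to prove that the closure of any positive braid $\beta\in B_p$ in which every generator $\sigma_1,\dots,\sigma_{p-1}$ occurs is non-split. The T-link braid
\[
(\sigma_1 \cdots \sigma_{r_1-1})^{s_1}\cdots(\sigma_1 \cdots \sigma_{r_n-1})^{s_n}
\]
is positive on $r_n$ strands. Since $s_n>0$, the last block contains every generator $\sigma_1,\dots,\sigma_{r_n-1}$, so the T-link falls under this criterion.

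For the general claim, the cleanest route is to cite the classical fact that closures of positive braids are fibered. By Stallings' theorem, a positive braid closure is fibered whenever each generator appears at least once. The fiber is the canonical Seifert surface obtained by Seifert's algorithm: one disk for each strand, joined by one half-twisted band for each crossing. This surface is connected exactly when every $\sigma_i$ occurs, because the disks for strands $i$ and $i+1$ are joined by the bands coming from $\sigma_i$. A fibered link has a connected fiber surface, and its complement is irreducible. A split link instead has a reducible complement, since the splitting sphere does not bound a ball in the complement. Therefore the link is non-split.

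As a more elementary alternative that avoids fiberedness, one can argue with diagrams. A positive braid diagram is a positive diagram, and it is connected as a diagram when every $\sigma_i$ appears. Positive diagrams are homogeneous, so by Cromwell's result the canonical surface has minimal genus, and a connected homogeneous diagram represents a non-split link. Another option is to use Menasco's theorem for alternating links, applied after reduction; this is less direct. A third option is linking numbers, but that fails when some component is a knot crossing only itself in a block, so I will not rely on it.

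The main obstacle is choosing a citable criterion that rules out splitness rigorously, and I would first try Stallings' fiberedness of positive braids with all generators present. The remaining steps are routine: checking connectivity of the canonical Seifert surface, and noting that the trivial case $r_n\ge 2$ with $s_n\ge1$ always supplies $\sigma_1,\dots,\sigma_{r_n-1}$.
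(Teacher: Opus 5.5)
Your proof is correct, but it takes a different route from the paper.

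\textbf{The paper's route.} The paper does use linking numbers, which you set aside. It first invokes \cite[Theorem~3.9]{de2024lorenz} to replace the T-link braid by a V-link representative $\beta=\alpha\Delta_p^2$ with $\alpha$ positive. In the full twist, every strand of a component $C$ crosses every strand of any other component $C'$ twice, both times positively. Hence $\operatorname{lk}(C,C')\ge ab\ge 1$ for every pair, whereas a split link must have a pair of components with linking number $0$. So the obstruction you anticipated is real only for the original T-link word; it disappears once a representative containing a full twist is available.

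\textbf{Your route.} You use Stallings' theorem that closures of homogeneous braids, in particular positive braids, containing every generator are fibered. You combine it with the standard fact that fibered link complements are irreducible. The nonvanishing of the Alexander or Conway polynomial would serve equally well, since these vanish for split links.

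\textbf{Comparison.} Your approach needs no T-link-specific input. It proves non-splitness for every positive braid closure with a connected diagram, including cases like $\sigma_1^2\sigma_2^2$ where two components have linking number $0$ and the paper's argument would not apply. The price is heavier machinery: fiberedness and irreducibility. The paper's approach depends on the cited structural theorem about V-link representatives. Beyond that it is completely elementary, and it gives the stronger conclusion that all pairwise linking numbers are positive.

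One small correction to an aside you do not rely on: Menasco's theorem is not available here, because positive braid closures are in general not alternating.
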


\begin{proof}
Let \(L\) be a T-link. If \(L\) is a knot, then there is nothing to prove, so
assume that \(L\) has at least two components.

By \cite[Theorem~3.9]{de2024lorenz}, \(L\) admits a braid-index-realizing
representative which is a V-link braid. In particular, \(L\) is represented by
the closure of a positive braid containing at least one positive full twist.
Thus we may write this braid as
\[
\beta=\alpha\Delta_p^2,
\]
where \(\Delta_p^2\) is the positive full twist on \(p\) strands and \(\alpha\)
is a positive braid.

Recall that for an oriented link
\[
L=K_1\cup \cdots \cup K_\mu,
\]
the linking number \(\operatorname{lk}(K_i,K_j)\) is defined by
\[
\operatorname{lk}(K_i,K_j)=\frac12\sum_c \operatorname{sign}(c),
\]
where the sum is taken over all crossings between the components \(K_i\) and
\(K_j\) in any diagram of \(L\). This is a link invariant.

We claim that any two distinct components of \(\widehat{\beta}\) have positive
linking number. Let \(C\) and \(C'\) be distinct components of
\(\widehat{\beta}\). Suppose that \(C\) is supported on \(a\ge 1\) strands and
\(C'\) is supported on \(b\ge 1\) strands. Since \(\Delta_p^2\) is a full twist,
each strand of \(C\) crosses each strand of \(C'\) exactly twice in the full
twist, and all these crossings are positive. Hence the full twist contributes
\(2ab\) positive crossings between \(C\) and \(C'\), and therefore contributes
\[
ab
\]
to \(\operatorname{lk}(C,C')\). The remaining factor \(\alpha\) is positive, so
it can only add non-negative contributions to the linking number. Thus
\[
\operatorname{lk}(C,C')\ge ab\ge 1.
\]
Therefore every pair of distinct components of \(L\) has positive linking
number.

Now suppose, for contradiction, that \(L\) is split. Then there is a sphere in
\(S^3\setminus L\) separating the components of \(L\) into two non-empty
subsets. Choose components \(C\) and \(C'\) on opposite sides of this sphere.
Since they belong to different split summands, we have
\[
\operatorname{lk}(C,C')=0.
\]
This contradicts the positive linking number established above. Hence \(L\) is
non-split.
\end{proof}
 
\begin{lemma}\label{lemma3C}
Let $k$ be a non-negative integer. The T-link $$T((3, 1), (4+3k, 3))$$ has three link components.
\end{lemma}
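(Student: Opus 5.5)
The plan is to count components of the closure as cycles of the permutation that the braid induces on strand positions. The link $T((3,1),(4+3k,3))$ is the closure of the positive braid
\[
\beta=(\sigma_1\sigma_2)(\sigma_1\sigma_2\cdots\sigma_{p-1})^{3},\qquad p=4+3k,
\]
on $p$ strands. The number of components of $\widehat{\beta}$ equals the number of cycles of the image $\pi\in S_p$ of $\beta$ under the natural map $B_p\to S_p$. So the whole proof reduces to computing the cycle decomposition of $\pi$.

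First I fix a convention: read braid words left to right, and let $\sigma_i$ swap positions $i$ and $i+1$.

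With this convention, $\sigma_1\sigma_2$ sends the strand in position $1$ to position $3$, position $2$ to $1$, and position $3$ to $2$. It fixes every other position. Call this $3$-cycle $t$.

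Likewise, $\sigma_1\sigma_2\cdots\sigma_{p-1}$ sends position $1$ to $p$ and position $i$ to $i-1$ for $i\ge 2$. So it is the cyclic shift $c(i)=i-1 \pmod p$, with residues taken in $\{1,\dots,p\}$, and its cube is $c^3(i)=i-3 \pmod p$.

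The permutation of $\beta$ is $\pi=c^3\circ t$, since $t$ is applied first. Explicitly,
\[
\pi(1)=p,\qquad \pi(2)=p-2,\qquad \pi(3)=p-1,\qquad \pi(i)=i-3 \ \ (4\le i\le p).
\]

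Next I will trace the orbits, using that $p=4+3k\equiv 1\pmod 3$. Away from the first three positions, $\pi$ preserves residues mod $3$.

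Starting at $1$: we get $1\mapsto p\mapsto p-3\mapsto\cdots\mapsto 4\mapsto 1$. This is exactly the positions congruent to $1$ mod $3$.

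Starting at $2$: we get $2\mapsto p-2=2+3k\mapsto\cdots\mapsto 5\mapsto 2$. This is exactly the positions congruent to $2$ mod $3$.

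Starting at $3$: we get $3\mapsto p-1=3+3k\mapsto\cdots\mapsto 6\mapsto 3$. This is exactly the positions congruent to $0$ mod $3$.

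These three orbits partition $\{1,\dots,p\}$. Hence $\pi$ has exactly three cycles, and $\widehat{\beta}$ has three components.

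I will check the case $k=0$ separately as a sanity check. There $p=4$ and the cycles are $\{1,4\}$, $\{2\}$ and $\{3\}$, consistent with the general pattern.

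The only delicate point is bookkeeping the direction convention consistently. The cycle count does not depend on the convention, since reversing it replaces $\pi$ by a conjugate or inverse permutation. I will remark on this so the argument does not hinge on the choice.
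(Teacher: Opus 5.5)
Your proof is correct, but it takes a different route from the paper. The paper's proof is a one-line citation of \cite[Theorem~1.1]{2025twisted}; you instead compute the permutation of $(\sigma_1\sigma_2)(\sigma_1\cdots\sigma_{p-1})^3$ directly.

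Your computation checks out. The formulas $\pi(1)=p$, $\pi(2)=p-2$, $\pi(3)=p-1$ and $\pi(i)=i-3$ for $i\ge 4$ are right. Since $p\equiv 1 \pmod 3$, $\pi$ preserves residues mod $3$ on all of $\{1,\dots,p\}$. Each residue class is a single orbit, of sizes $k+2$, $k+1$, $k+1$; these sum to $p$, which confirms the partition, and the degenerate case $k=0$ is handled correctly.

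The citation buys brevity and generality, at the cost of using an external result as a black box. Your argument is elementary and self-contained. It also tells you which strands each component occupies, which the paper later recovers by hand in the proof of Proposition~\ref{pro21}. There the conjugate braid $(\sigma_1\cdots\sigma_{p-1})^3(\sigma_1\sigma_2)$ is used, and its permutation is your $\pi$ conjugated by $t$. Conjugation by $t$ carries your orbit $\{1,4,7,\dots,p\}$ to $\{3,4,7,\dots,p\}$. This is the component through the third strand, supported on $k+2$ strands, exactly as the paper finds.
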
 
 
\begin{proof}
It follows from \cite[Theorem 1.1]{2025twisted}.
\end{proof} 

\begin{proposition}\label{pro1}
Let $a, b, c, k$ be positive integers, with at least one of $a,b,c$ greater than $1$. The T-link
\[
T((a+b+c, c), (a+b+2c+k(a+b+c), a+b+c))
\]
is a braided satellite link with companion
\[
T((3, 1), (4+3k, 3)).
\]
\end{proposition}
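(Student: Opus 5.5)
The plan is to realize the satellite structure explicitly at the level of braids: the link $T((a+b+c, c), (a+b+2c+k(a+b+c), a+b+c))$ should be obtained from a braid representative of the companion by replacing each of its three strands with a parallel bundle of $a$, $b$ and $c$ strands respectively, and then inserting positive braids inside the bundles. Put $N=a+b+c$ and $P=a+b+2c+kN$. The given braid has $P$ strands and contains the factor $(\sigma_1\cdots\sigma_{P-1})^{N}$, whose closure is the torus link $T(P,N)=T(N,P)$.

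\textbf{Step 1: pass to $N$ strands.} I would first rewrite the link as the closure of a positive braid on $N$ strands. The torus part becomes $(\sigma_1\cdots\sigma_{N-1})^{P}$. The extra factor $(\sigma_1\cdots\sigma_{N-1})^{c}$, which sits on $N$ of the $P$ strands, has to be tracked through the standard exchange between the two torus descriptions. I would do this with the same manipulation used to obtain the V-braid representatives in \cite[Theorem 3.9]{de2024lorenz}. The expected output is a V-type braid on $N$ strands consisting of $(\sigma_1\cdots\sigma_{N-1})^{c+kN+\dots}$ together with partial twists on the first or last strands.

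\textbf{Step 2: match with a cable of the companion braid.} Next I would show that this $N$-strand braid equals a ``bundled'' version of the $3$-strand braid
\[
\sigma_1\sigma_2(\sigma_1\sigma_2)^{4+3k}
\]
whose closure is $T((3,1),(4+3k,3))$. Here the three strands are replaced by ribbons of widths $a$, $b$, $c$. By Lemma~\ref{lemma3C} this companion braid has three components, so its permutation is trivial. Consequently each strand closes up on itself, the width assignment is consistent around the closure, and each component is replaced by a single ribbon.

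A crossing $\sigma_i$ of the $3$-strand braid becomes a positive crossing of two ribbons, which is a positive word in the $\sigma_j$. The full twists $(\sigma_1\sigma_2)^{3}$ become full twists on $N$ strands, and these automatically contain a full twist inside each ribbon. The mismatch between the exponents, namely the terms $a+b+2c$ versus $4$ and the extra $c$, should be absorbed in two ways:
\begin{itemize}
\item by twisting inside the ribbons, giving positive braids in each solid torus;
\item by the framing correction needed so that $f_i$ sends the preferred longitude to the preferred longitude. The blackboard framing of a ribbon differs from the Seifert framing by the writhe of that component, and this difference is compensated by inner full twists.
\end{itemize}
This identification of the two words, up to braid relations and conjugation, is the main obstacle. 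I would first verify it in the case $a=b=c=1$, where nothing changes, and then in the case where only one width exceeds $1$. After that I would handle general widths by sliding crossings through ribbons using the standard identities for parallel strands.

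\textbf{Step 3: the satellite conditions.} The pattern in each solid torus is then a closed positive braid of width $a$, $b$ or $c$, so the pattern is braided. A closed braid of width $w\ge 1$ has winding number $w$, so it is not contained in a $3$-ball. If $w\ge 2$ it is not isotopic to the core. Since at least one of $a,b,c$ exceeds $1$, the pattern is nontrivial. For each nontrivial component $P_i$, the outside of $W_i$ contains the two other ribbons, so it contains at least two components of $L$. The link $L$ is a T-link and hence non-split by Proposition~\ref{prop:T-links-nonsplit}. Lemma~\ref{lem:essential-tori} therefore gives that every relevant companion torus is essential, which completes the proof.
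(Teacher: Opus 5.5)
Your route differs from the paper's. The paper stays with the T-link words $B_1,B_2,B_3$. It tracks strands to show that the first $a+b$ strands close up into parallel components, and then, after deleting $a+b-2$ of them, that strands $3,\dots,2+c$ do too. Deleting all but one per bundle reaches $T((3,1),(4+3k,3))$. Transposing first and exhibiting $L$ as a ribbon cable would be a cleaner alternative, since it makes the patterns explicit. However, Step~2 fails as written, because your $3$-braid for the companion is wrong.

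Since $\sigma_1\sigma_2(\sigma_1\sigma_2)^{4+3k}=(\sigma_1\sigma_2)^{5+3k}$ and $5+3k\not\equiv 0\pmod 3$, its permutation is a $3$-cycle. Its closure is therefore the torus knot $T(3,5+3k)$, not the three-component link $T((3,1),(4+3k,3))$. Consequently:
\begin{itemize}
\item Lemma~\ref{lemma3C} does not apply to this braid, and your claim that its permutation is trivial is false.
\item Ribbons of distinct widths $a,b,c$ cannot be attached consistently around its closure.
\item Even your base case $a=b=c=1$ fails, since there the output of Step~1 is the pure braid $(\sigma_2\sigma_1)(\sigma_1\sigma_2)^{4+3k}$, not yours.
\end{itemize}
The slip is that the factor $\sigma_1\sigma_2$ in the T-link word acts on only $3$ of the $4+3k$ strands, so it cannot be appended unchanged after transposing the torus factor. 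The same conversion you invoke in Step~1 turns it into the prefix $\sigma_2\sigma_1$. The companion is thus $V((3,\overline{1}),(3,4+3k))$, with braid
\[
\beta_C=(\sigma_2\sigma_1)(\sigma_1\sigma_2)^{4+3k}=\sigma_2\sigma_1^2\sigma_2\,\Delta_3^{2(k+1)} .
\]

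With this correction, what you call the main obstacle needs no case-by-case sliding; it follows from two standard cabling identities. Group the $N$ strands into ribbons of widths $a,b,c$, with the $c$-ribbon being the last $c$ strands. Write $\widetilde{\gamma}$ for the blackboard ribbon cable of a $3$-braid $\gamma$. Then
\[
\Delta_N^2=\Delta_a^2\Delta_b^2\Delta_c^2\,\widetilde{\Delta_3^2},
\qquad
(\sigma_{N-1}\cdots\sigma_1)^c(\sigma_1\cdots\sigma_{N-1})^c=\Delta_c^4\,\widetilde{\sigma_2\sigma_1^2\sigma_2}.
\]
Since $P=c+(k+1)N$, the V-braid $(\sigma_{N-1}\cdots\sigma_1)^c(\sigma_1\cdots\sigma_{N-1})^{P}$ of Lemma~\ref{lem2} equals $\widetilde{\beta_C}\,\Delta_a^{2(k+1)}\Delta_b^{2(k+1)}\Delta_c^{2(k+3)}$ after sliding the internal twists along the ribbons. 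So Step~1 must deliver exactly this braid, with the reversed prefix $(\sigma_{N-1}\cdots\sigma_1)^c$, and not just ``partial twists on the first or last strands''.

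Because $\beta_C$ is pure, each companion component is a single strand without self-crossings. Hence the blackboard framing of each ribbon is already the preferred longitude, and no framing correction is needed. In any case, a framing change only adds full twists to a pattern; it cannot absorb an exponent mismatch in the braid word. The patterns are the closed braids $\Delta_a^{2(k+1)}$, $\Delta_b^{2(k+1)}$ and $\Delta_c^{2(k+3)}$, and your Step~3 then goes through exactly as in the paper.
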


\begin{proof}
Let 
\[
L_1 = T((a+b+c, c), (a+b+2c+k(a+b+c), a+b+c)),
\]
which is the closure of the braid
\[
(\sigma_1 \cdots \sigma_{a+b+c-1})^{c}
(\sigma_1 \cdots \sigma_{a+b+2c+k(a+b+c)-1})^{a+b+c}.
\]
Equivalently, by conjugation, $L_1$ is the closure of the braid
\[
B_1 = (\sigma_1 \cdots \sigma_{a+b+2c+k(a+b+c)-1})^{a+b+c}
(\sigma_1 \cdots \sigma_{a+b+c-1})^{c}.
\]

We first show that the first $a+b$ strands of $B_1$ form parallel components in $L_1$. 
To see this, track these strands through the braid. After passing once through the first factor of $B_1$, they move to the strands indexed from 
\[
c+k(a+b+c)+1 \quad \text{to} \quad a+b+c+k(a+b+c)
\]
at the bottom of $B_1$, preserving their order from top to bottom, that is, each strand returns to the same relative position among these strands. After closing the braid, they return to the top and repeat this motion. Iterating this process $k$ times and passing through the first factor of $B_1$, they occupy the positions from $c+1$ to $a+b+c$ at the bottom of the first factor
\[
(\sigma_1 \cdots \sigma_{a+b+2c+k(a+b+c)-1})^{a+b+c}.
\]

Next, these strands pass through the second factor
\[
(\sigma_1 \cdots \sigma_{a+b+c-1})^{c},
\]
which involves only the first $a+b+c$ strands. After this, they occupy the first $a+b$ positions preserving their initial order. Then they return to their original positions under the braid closure. Hence these strands trace disjoint, parallel components in $L_1$.

Since these $a+b$ components are parallel, they lie in a solid torus and are isotopic to its core. Removing $a+b-2$ of these parallel components, we obtain the link
\[
L_2 = T((2+c, c), (2+2c+k(2+c), 2+c)),
\]
represented by the braid
\[
B_2 = (\sigma_1 \cdots \sigma_{2+2c+k(2+c)-1})^{2+c}
(\sigma_1 \cdots \sigma_{2+c-1})^{c}.
\]

We now analyze $L_2$. The components corresponding to strands $3$ through $2+c$ of $B_2$ are parallel. Indeed, tracking these strands through the braid shows that they move cyclically through the first factor of $B_2$ and, after $k+2$ passes through this factor, they occupy positions $1$ through $c$ preserving their initial order at the bottom of
\[
(\sigma_1 \cdots \sigma_{2+2c+k(2+c)-1})^{2+c}.
\]
Passing through the second factor
\[
(\sigma_1 \cdots \sigma_{2+c-1})^{c},
\]
they return to strands $3$ through $2+c$ preserving  their initial order and hence to their original positions after closure. Therefore, these $c$ components are parallel in $L_2$.

Removing $c-1$ of these parallel components yields the link
\[
L_3 = T((3, 1), (4+3k, 3)),
\]
which is represented by the braid
\[
B_3 = (\sigma_1 \cdots \sigma_{4+3k-1})^{3}(\sigma_1 \sigma_2).
\]

Let $C_1, C_2, C_3$ denote the components of $L_3$ corresponding to the first, second, and third strands of $B_3$, respectively. Reversing the above construction, we see that $L_1$ is obtained from $L_3$ by inserting $a$, $b$, and $c$ parallel components in tubular neighborhoods of $C_1$, $C_2$, and $C_3$, respectively. Each family of components is isotopic to the core of the corresponding solid torus. More precisely, each component of $L_1$ is contained in a solid torus associated to a component of the companion link, and within each such torus the pattern is realized as a closed braid.

It remains to verify that the companion tori associated to the nontrivial
patterns are essential in the complement of $L_1$. Since at least one of $a,b,c$ is greater than $1$, at least one of the patterns is nontrivial. Moreover, the link $L_1$ is a T-link, and hence is
non-split by Proposition~\ref{prop:T-links-nonsplit}. Each companion torus
separates the components of $L_1$ lying inside the corresponding tubular
neighborhood from the remaining components of $L_1$. In the present
construction, the outside of each companion torus contains at least two
components of $L_1$. Therefore, by Lemma~\ref{lem:essential-tori}, the
companion tori corresponding to the nontrivial patterns are
essential in the complement of $L_1$.

Therefore,
\[
T((a+b+c, c), (a+b+2c+k(a+b+c), a+b+c))
\]
is a braided satellite link with companion
\[
T((3, 1), (4+3k, 3)). \qedhere
\]
\end{proof}

Proposition~\ref{pro1} shows that the satellite structures appearing in this
setting are not isolated examples, but are part of the broader problem of
understanding the geometric types of T-links. These geometric structures have
been studied in several works, including results on torus, satellite, and
hyperbolic cases; see, for instance,
\cite{unexpected, de2021satellites, twofulltwists, de2022torus,
dePaivaPurcell2024, dePaiva2025, de2022hyperbolic}. We now introduce a new
explicit family of T-links admitting a natural braided satellite structure.

\begin{lemma}\label{lemC}
Let $k$ be a non-negative integer. The T-link
\[
T((3,1),(3+2k,2))
\]
has three components.
\end{lemma}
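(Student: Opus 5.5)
The plan is to count components by computing the permutation of the braid
\[
\beta=(\sigma_1\sigma_2)(\sigma_1\cdots\sigma_{2+2k})^{2},
\]
which is conjugate to the defining braid of $T((3,1),(3+2k,2))$. This braid lies on $n=3+2k$ strands. The number of link components equals the number of cycles of the induced permutation in $S_n$. Alternatively, one could quote \cite[Theorem 1.1]{2025twisted} as in Lemma~\ref{lemma3C}. A direct count is elementary, so we carry it out.

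First we record the permutations. We use the convention that $\sigma_1\cdots\sigma_{n-1}$ sends the strand at position $i$ to position $i+1$ for $i<n$, and sends position $n$ to position $1$. Its square therefore acts as $i\mapsto i+2 \pmod n$, with representatives taken in $\{1,\dots,n\}$. The factor $\sigma_1\sigma_2$ realizes the $3$-cycle $1\mapsto 2\mapsto 3\mapsto 1$ and fixes every position $i\ge 4$. The exact convention (left-to-right versus right-to-left, or $i\mapsto i\pm1$) does not affect the cycle count, since reversing conventions replaces the permutation by its inverse or a conjugate.

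Next we compose. Apply $\sigma_1\sigma_2$ first, then the shift by $2$, and write $\pi$ for the total permutation. We get
\[
\pi(1)=4,\qquad \pi(2)=5,\qquad \pi(3)=3,
\]
and $\pi(i)=i+2 \pmod n$ for $i\ge 4$. In particular $\pi(n-1)=1$ and $\pi(n)=2$, where $n-1=2+2k$ is even and $n=3+2k$ is odd.

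Now we find the cycles. Position $3$ is fixed, giving one component. Starting at $1$, the orbit is
\[
1\to 4\to 6\to\cdots\to 2+2k\to 1,
\]
which is the cycle on $1$ together with all even positions $\ge 4$. Starting at $2$, the orbit is
\[
2\to 5\to 7\to\cdots\to 3+2k\to 2,
\]
which is the cycle on $2$ together with all odd positions $\ge 5$. These three orbits exhaust $\{1,\dots,n\}$, so the closure has exactly three components.

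For $k=0$ we have $n=3$, and the same computation works: $\pi(1)=\sigma$-shift applied to $2$, which gives $4\equiv 1$. Hence $\pi$ is the identity, with three fixed points, and the closure has three components. This degenerate case should be checked separately.

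The only real obstacle is bookkeeping: fixing the permutation convention consistently and handling the wrap-around modulo $n$, including the case $k=0$. No geometric input is needed.
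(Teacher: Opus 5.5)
Your proof is correct, but it takes a different route from the paper. The paper gives no computation: it simply invokes \cite[Theorem~1.1]{2025twisted}, an external result on the number of components of such links, which it also uses for Lemma~\ref{lemma3C}. You instead compute the permutation of $(\sigma_1\sigma_2)(\sigma_1\cdots\sigma_{2+2k})^2$ by hand and obtain the orbits $\{3\}$, $\{1,4,6,\dots,2+2k\}$ and $\{2,5,7,\dots,3+2k\}$. These are right, including the wrap-around $\pi(2+2k)=1$ and $\pi(3+2k)=2$, and the sizes $1+(k+1)+(k+1)=3+2k$ account for every strand.

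The citation is shorter and handles whole families at once. Your argument is self-contained and gives more than the count: it shows the components are carried by $1$, $k+1$ and $k+1$ strands. Proposition~\ref{pro21} uses this implicitly when it identifies the core of $V$ with the one-strand component of $B_1$.

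Your check is also more robust at $k=0$. There $r_1=r_2=3$, so $T((3,1),(3,2))$ is not a T-link in the strict sense of the paper's definition, which requires $r_1<r_2$; the braid is just the pure braid $\Delta_3^2$. A direct verification avoids having to confirm that the cited theorem's hypotheses cover this degenerate case.

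Two cosmetic remarks. First, the braid you write is literally the defining braid, not merely conjugate to it. Second, with your per-factor convention ($\sigma_1\cdots\sigma_{n-1}$ acting by $i\mapsto i+1$), a consistent composition would apply the shift before $\sigma_1\sigma_2$. So you have actually computed the permutation of the cyclically rotated word. This is harmless because $xy$ and $yx$ are conjugate, as your remark on conventions already anticipates.
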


\begin{proof}
It follows from \cite[Theorem 1.1]{2025twisted}.
\end{proof}

\begin{proposition}\label{pro21}
Let $k$ be a non-negative integer. The T-link
\[
T((3,1),(4+3k,3))
\]
is a braided satellite link with companion
\[
T((3,1),(3+2k,2)).
\]
\end{proposition}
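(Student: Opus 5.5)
The plan follows the proof of Proposition~\ref{pro1}: find a braided pattern inside the positive braid representative of $T((3,1),(4+3k,3))$, collapse it to recover a braid for $T((3,1),(3+2k,2))$, and then check essentiality with Lemma~\ref{lem:essential-tori}.

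Write $n=4+3k$. Up to conjugation, the link is the closure of
\[
B=(\sigma_1\cdots\sigma_{n-1})^{3}(\sigma_1\sigma_2).
\]
The first factor induces a cyclic shift of the strand positions by $3$ modulo $n$. The factor $\sigma_1\sigma_2$ then cyclically permutes positions $1,2,3$.

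First I will record the permutation of $B$ explicitly and list the strands of each component. By Lemma~\ref{lemma3C} there are exactly three components. Since $n=4+3k\equiv 1 \pmod 3$, the shift by $3$ acts transitively on $\mathbb Z/n$. Composing with the $3$-cycle on $\{1,2,3\}$ should therefore give one component that uses about twice as many strands as each of the other two. Heuristically the counts are $2(k+1)$, $k+1$ and a remaining short cycle, summing to $4+3k$. The companion $T((3,1),(3+2k,2))$ lives on $3+2k=n-(k+1)$ strands, and by Lemma~\ref{lemC} it also has three components. This suggests the satellite structure: one companion component on $k+1$ strands is replaced by a $2$-strand braided pattern, that is, a connected closed $2$-braid. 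The component count stays at $3$ while the strand count goes up by $k+1$.

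The key step is to locate this pattern in $B$. I will track the long component and show that its strands come in $k+1$ pairs. The two strands of each pair stay in adjacent positions throughout $(\sigma_1\cdots\sigma_{n-1})^3$, and every other strand crosses both strands of a pair together, as a band. A pair crosses itself only once per period, through one letter of $\sigma_1\sigma_2$ or of the twist, which acts as $\sigma_1^{\pm1}$ inside the band. Collapsing each band to a single strand then turns $B$ into a braid on $3+2k$ strands. Using the same bookkeeping as in Proposition~\ref{pro1}, I will check that this braid is conjugate to
\[
(\sigma_1\cdots\sigma_{2+2k})^{2}(\sigma_1\sigma_2).
\]
The band therefore lies in a tubular neighborhood $W$ of one component $C_j$ of the companion. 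Inside $W$ the link is a closed $2$-braid with an odd number of crossings, so it is a single component with winding number $2$. It is not contained in a ball, and it is not isotopic to the core because its winding number is $2$, not $1$. The other two components are trivial pattern components. I expect this strand-tracking and band-collapsing step to be the main obstacle. The exact index arithmetic is delicate, and the pairing may only be visible after a conjugation (for instance moving $\sigma_1\sigma_2$ to the front, as was done for $B_1$). If the direct check is messy, I would first work it out for $k=0$ and $k=1$ and then induct on $k$. Going from $k$ to $k+1$ adds three strands to the satellite and two to the companion, with one new pair in the band.

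Once the pattern is identified, essentiality follows as in Proposition~\ref{pro1}. The link $T((3,1),(4+3k,3))$ is non-split by Proposition~\ref{prop:T-links-nonsplit}. The only nontrivial pattern sits in $W$. The outside of $\partial W$ contains the other two components by Lemma~\ref{lemma3C}, so it contains at least two components of the link. Lemma~\ref{lem:essential-tori} then shows that $\partial W$ is essential. Hence the link is a braided satellite with companion $T((3,1),(3+2k,2))$.
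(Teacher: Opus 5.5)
\textbf{There is a genuine gap.} Your construction rests on a strand count that is wrong, and the satellite structure you propose does not exist for $k\ge 1$.

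\emph{The strand count.} Read $B$ from top to bottom, with $n=4+3k$. The factor $(\sigma_1\cdots\sigma_{n-1})^3$ sends $1\mapsto n-2$, $2\mapsto n-1$, $3\mapsto n$ and $i\mapsto i-3$ for $i\ge 4$. The factor $\sigma_1\sigma_2$ sends $1\mapsto 3$, $2\mapsto 1$, $3\mapsto 2$. The three components therefore run along the strands
\[
\{3\}\cup\{4,7,\dots,4+3k\},\qquad \{2\}\cup\{6,9,\dots,3+3k\},\qquad \{1\}\cup\{5,8,\dots,2+3k\}.
\]
These have $k+2$, $k+1$ and $k+1$ strands; for $k=1$ they are $\{3,4,7\},\{2,6\},\{1,5\}$, as in Figure~\ref{Fig2}. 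There is no component on $2(k+1)$ strands, so there are no pairs to collapse into bands.

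\emph{Linking numbers rule out the plan.} Counting crossings in $B$, the pairwise linking numbers of $L$ are $k+2,\,k+2,\,k+1$. In $B_1=(\sigma_1\cdots\sigma_{2+2k})^2(\sigma_1\sigma_2)$ strand $3$ is fixed, and the components sit on $1,\,k+1,\,k+1$ strands. Their linking numbers are $1,1,k+1$, and both $1$'s involve the one-strand component. If you replace a $(k+1)$-strand component by a winding-number-$2$ pattern, you get linking numbers $2(k+1),\,2,\,1$. These differ from $k+2,\,k+2,\,k+1$ as soon as $k\ge 1$. Your guess agrees with the truth only at $k=0$, so inducting from $k=0$ would lead you astray.

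\emph{The correct structure, as in the paper.} The companion component to be replaced is the fixed third strand of $B_1$. The satellite component is the one through strand $3$ of $B$, which lies on $k+2$ strands. Track it as $3\to n\to n-3\to\cdots\to 4\to 3$; it uses the last overstrand of the twist and the overstrand of $\sigma_1\sigma_2$. Its $k+2$ strands can then be enclosed in a trivial solid torus $V$ that meets the disc bounded by the braid axis in a single meridian disc. Replacing this component by the core of $V$, which is a one-strand braid, gives $B_1$. The pattern is a closed $(k+2)$-braid in $V$ of winding number $k+2$, which is exactly what the linking numbers force ($1\cdot(k+2)=k+2$). It is nontrivial because $k+2\ge 2$. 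The other two components are trivial patterns. With this pattern, your essentiality argument via Proposition~\ref{prop:T-links-nonsplit} and Lemma~\ref{lem:essential-tori} goes through unchanged.
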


\begin{proof}
Let 
\[
L = T((3,1),(4+3k,3)),
\]
which is the closure of the braid
\[
B = (\sigma_1 \cdots \sigma_{4+3k-1})^{3}(\sigma_1 \sigma_2).
\]
The link $L$ has three components by Lemma~\ref{lemma3C}.

Let $L_3$ denote the component corresponding to the third strand of $B$. 
We describe how this strand moves through the braid. Starting at the third strand at the top, it passes through the braid $(\sigma_1 \cdots \sigma_{4+3k-1})^{3}$ and appears at the last strand (i.e., strand $4+3k$) at the bottom. After closing the braid, it returns to the same position at the top and repeats this process. Iterating this motion, after $k$ repetitions the strand reaches position $4$ at the top, and then moves down to position $1$ at the bottom of $(\sigma_1 \cdots \sigma_{4+3k-1})^{3}$. It then passes through the factor $(\sigma_1 \sigma_2)$, where it becomes the overstrand and ends at the third strand at the bottom of $B$. Finally, under the braid closure, it returns to its original position.

From this description, we see that the component $L_3$ is supported on a subset of $k+2$ strands. In particular, it contains the last overstrand of $(\sigma_1 \cdots \sigma_{4+3k-1})^{3}$ and the overstrand of $(\sigma_1 \sigma_2)$. Therefore, $L_3$ lies inside an embedded trivial solid torus $V$ whose boundary is a trivial torus $T$ intersecting the disc bounded by the braid axis once (see Figure~\ref{Fig2}).

\begin{figure}[htbp]
\centering
\begin{tikzpicture}[line cap=round,line join=round]

    % Elipse externa
    \draw[line width=1.2pt] (0,0) ellipse (4 and 1.5);

    % Pontos coloridos em RGB
    \foreach \x/\r/\g/\b in {
        -3/255/0/0,
        -2/0/150/0,
        -1/255/140/0,
         0/255/140/0,
         1/255/0/0,
         2/0/150/0,
         3/255/140/0
    }{
        \fill[fill={rgb,255:red,\r;green,\g;blue,\b}] (\x,0) circle (0.12);
    }

    % Curva laranja suave envolvendo apenas os pontos laranja
    \draw[line width=1.3pt, orange]
    plot[smooth cycle, tension=0.65] coordinates {
        (-1.60,0.30)
        (-1.00,0.75)
        (-0.6,0.95)
        (0.55,0.95)
        (1.45,0.75)
        (2.35,0.72)
        (3.30,0.55)
        (3.55,0.05)
        (3.35,-0.35)
        (2.85,-0.42)
        (2.45,-0.10)
        (2.10,0.28)
        (1.45,0.36)
        (0.85,0.32)
        (0.45,-0.12)
        (0.10,-0.58)
        (-0.65,-0.68)
        (-1.30,-0.45)
        (-1.60,-0.05)
    };

\end{tikzpicture}
\caption{Disk bounded by the braid axis with seven colored points representing the link components of $T((3,1),(7,3))$. The orange curve encloses exactly the three orange points and represents a meridian of a trivial solid torus containing the corresponding link component.}
\label{Fig2}
\end{figure}

Now consider the link formed by the core of $V$ together with the other two components of $L$. This link is represented by the braid
\[
B_1 = (\sigma_1 \cdots \sigma_{3+2k-1})^{2}(\sigma_1 \sigma_2),
\]
which is precisely the T-link
\[
T((3,1),(3+2k,2)).
\]
The core of $V$ corresponds to the third strand of $B_1$.

It follows that $L$ is obtained from $T((3,1),(3+2k,2))$ by inserting braided patterns inside tubular neighborhoods of its components. More precisely, each component of $L$ lies inside a solid torus corresponding to a component of the companion link, and the pattern is given by closed braids in these solid tori.

It remains only to check essentiality of the companion tori associated to
nontrivial patterns. Since $L$ is a T-link, it is non-split by
Proposition~\ref{prop:T-links-nonsplit}. The construction above shows that
each such torus separates a nontrivial braided pattern from the remaining
components of $L$, with at least two components on the outside. Therefore
Lemma~\ref{lem:essential-tori} applies, and these tori are essential in the
complement of $L$.

Therefore,
\[
T((3,1),(4+3k,3))
\]
is a braided satellite link with companion
\[
T((3,1),(3+2k,2)). \qedhere
\]
\end{proof}

\begin{lemma}\label{lem1}
Let $a,b,c,k$ be positive integers, with at least one of $a,b,c$ greater than $1$. The link
\[
T((a+b+c, c), (a+b+2c+k(a+b+c), a+b+c))
\]
is a braided satellite link with companion
\[
T((3,1),(3+2k,2)).
\]
\end{lemma}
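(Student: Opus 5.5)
The plan is to compose the two satellite structures already established. By Proposition~\ref{pro1}, the link
\[
L_1=T((a+b+c, c), (a+b+2c+k(a+b+c), a+b+c))
\]
is a braided satellite with companion $L_3=T((3,1),(4+3k,3))$. It is obtained by inserting $a$, $b$, $c$ parallel copies of the core into tubular neighborhoods $N(C_1),N(C_2),N(C_3)$ of the three components of $L_3$. By Proposition~\ref{pro21}, $L_3$ is itself a braided satellite of $C'=T((3,1),(3+2k,2))$. In that structure the third component $C_3$ of $L_3$ lies, as a closed braid on $k+2$ strands, in a trivial solid torus $V$ whose core is the third strand of the braid $B_1$ of $C'$. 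The first two components of $L_3$ correspond, with trivial patterns, to the first two components of $C'$. I will therefore write $L_1$ directly as a satellite of $C'$.

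\textbf{Step 1 (nesting the solid tori).} Shrink $N(C_3)$ so that it lies in the interior of $V$, and choose $N(C_1),N(C_2)$ disjoint from $V$ and identified with tubular neighborhoods of the first two components of $C'$. Inside $V$, the link formed by the $c$ parallel copies of $C_3$ is the image of a cable of a closed braid, hence again a closed braid in $V$. Concretely, it meets each meridian disk of $V$ in $c(k+2)$ points, all positively, since each meridian disk of $V$ meets $N(C_3)$ in $k+2$ meridian disks of $N(C_3)$, each containing $c$ points. So $L_1$ is $C'$ with three braided pattern components:
\begin{itemize}
\item $a$ parallel core curves in the first torus,
\item $b$ parallel core curves in the second torus,
\item the $c$-fold parallel of a $(k+2)$-strand braid in $V$.
\end{itemize}
One can also check this directly on braid words: the strands of $B_1$ for $L_1$ group into three blocks tracking the three strands of $B_1$, as in the strand-tracking arguments of the two propositions. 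This gives a useful sanity check on the framing, namely that the parallel copies are the untwisted ones relative to the blackboard framing of the closed braid, which matches the preferred longitude convention after adjusting the identification $f_i$.

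\textbf{Step 2 (nontriviality).} The third pattern component is nontrivial for every $k\ge 1$: it has $k+2\ge 3$ as winding number in $V$ if $c=1$, and $c\ge 1$ components otherwise, so it is never the core. The first two pattern components are nontrivial exactly when $a>1$, respectively $b>1$. The pattern is therefore nontrivial. Note that the hypothesis that one of $a,b,c$ exceeds $1$ is not even needed for this, although it is harmless.

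\textbf{Step 3 (essentiality).} $L_1$ is a T-link, so it is non-split by Proposition~\ref{prop:T-links-nonsplit}. Each companion torus ($\partial N(C_1)$, $\partial N(C_2)$, $\partial V$) has outside containing the components coming from the other two companion components, hence at least two components of $L_1$. Lemma~\ref{lem:essential-tori} then gives essentiality of every torus attached to a nontrivial pattern.

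The main obstacle is Step 1: verifying that the pattern in $V$ is genuinely not contained in a ball and is braided, and that the identifications respect preferred longitudes. I would handle this by the meridian-disk count above together with the fact that a closed braid in a solid torus with at least one strand is never contained in a ball, since it has nonzero winding number.
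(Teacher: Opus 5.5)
Your proposal is correct and follows the paper's argument: compose the braided satellite structure of Proposition~\ref{pro1} over $T((3,1),(4+3k,3))$ with that of Proposition~\ref{pro21} over $T((3,1),(3+2k,2))$. You make explicit several things the paper only asserts: nesting $N(C_3)$ inside $V$, the meridian-disk count showing the composite pattern is braided, and essentiality checked directly via Lemma~\ref{lem:essential-tori} and Proposition~\ref{prop:T-links-nonsplit} rather than ``inherited'' from the two propositions. You are also right that the hypothesis on $a,b,c$ is superfluous, since the pattern in $V$ has winding number $c(k+2)\ge 3$.

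One small correction: $f_i$ must match preferred longitudes and cannot be ``adjusted''. So the family in $N(C_1)$ is not $a$ untwisted core curves, which would be pairwise unlinked, whereas distinct components of a T-link link positively. It is a twisted parallel, the closure of a positive power of $\Delta_a^2$, and likewise in $N(C_2)$. This is still a closed braid in the solid torus, so your conclusion is unaffected.
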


\begin{proof}
By Proposition~\ref{pro1}, the link
\[
L=T((a+b+c,c),(a+b+2c+k(a+b+c),a+b+c))
\]
is a braided satellite link with companion
\[
C_1=T((3,1),(4+3k,3)).
\]
That is, $L$ is obtained by inserting braided patterns inside tubular
neighborhoods of the components of $C_1$.

On the other hand, by Proposition~\ref{pro21}, the link $C_1$ is itself a
braided satellite link with companion
\[
C_0=T((3,1),(3+2k,2)).
\]
Thus $L$ is an iterated braided satellite link: first $C_1$ is obtained
from the companion $C_0$ by a braided satellite construction, and then
$L$ is obtained from $C_1$ by another braided satellite construction.

Equivalently, composing the two braided satellite structures gives a
braided satellite structure on $L$ whose companion is $C_0$. The essentiality of the resulting companion tori follows from the
essentiality established in Propositions~\ref{pro1} and~\ref{pro21}. Therefore
\[
T((a+b+c,c),(a+b+2c+k(a+b+c),a+b+c))
\]
is a braided satellite link with companion
\[
T((3,1),(3+2k,2)). \qedhere
\]
\end{proof}

\begin{lemma}\label{lem2}
Let $a,b,c,k$ be positive integers, with at least one of $a,b,c$ greater than $1$. The link
\[
L = T((a+b+c, c), (a+b+2c+k(a+b+c), a+b+c))
\]
is equivalent to the V-link
\[
V = V((a+b+c, \overline{c}), (a+b+c, (k+1)(a+b+c) + c)).
\]
In particular, $L$ admits a positive braid representative with $a+b+c$ strands and $(k+1)$ full twists. Moreover, the companion tori described in Lemma~\ref{lem1} can be isotoped to be disjoint from the braid axis of $V$.
\end{lemma}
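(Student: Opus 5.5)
The plan is to use the standard torus-link duality for T-links, which exchanges the strand count and twist count in the last factor. Put \(p=a+b+c\) and \(N=a+b+2c+k(a+b+c)\). The link \(L\) is the closure of
\[
(\sigma_1\cdots\sigma_{p-1})^{c}(\sigma_1\cdots\sigma_{N-1})^{p}.
\]
The last factor \((\sigma_1\cdots\sigma_{N-1})^{p}\) is the torus link \(T(N,p)\) on \(N\) strands. We view this torus link on the standard torus. Exchanging the roles of the core and the axis, it can be braided on \(p\) strands as \((\sigma_1\cdots\sigma_{p-1})^{N}\). In this dual picture, the extra twisting \((\sigma_1\cdots\sigma_{p-1})^{c}\), which acts on the first \(p\) strands near the axis, becomes a twist region on \(p\) adjacent strands of the \(p\)-strand braid. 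Its handedness is the opposite ordering, which is exactly why the V-braid uses the factor \((\sigma_{p-1}\cdots\sigma_1)^{c}\), written \(\overline{c}\).

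This is the same mechanism behind \cite[Theorem 3.9]{de2024lorenz}, so the first step is to invoke that result (or its proof) in the case \(n=1\), \(r_1=p\), \(s_1=c\), together with the final pair \((N,p)\). That produces the V-link
\[
V((p,\overline{c}),(p,N)).
\]
Next, write \(N=(k+1)p+c\), which is the claimed twist parameter \((k+1)(a+b+c)+c\). If citing the theorem is not literal enough, I would give the isotopy directly. Draw \(T(N,p)\) on a torus, and realize the \(c\) extra partial twists on \(p\) strands as twisting a collar of the torus in the longitudinal direction. Then rebraid around the other core circle. The number of strands becomes \(p\) and the exponent becomes \(N\).

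Counting full twists is then routine. The braid is
\[
(\sigma_{p-1}\cdots\sigma_1)^{c}(\sigma_1\cdots\sigma_{p-1})^{(k+1)p+c},
\]
and \((\sigma_1\cdots\sigma_{p-1})^{(k+1)p}=(\Delta_p^2)^{k+1}\) is central. So the braid equals \(\beta_0(\Delta_p^2)^{k+1}\) with
\[
\beta_0=(\sigma_{p-1}\cdots\sigma_1)^{c}(\sigma_1\cdots\sigma_{p-1})^{c}
\]
positive. This gives \(p=a+b+c\) strands and \(k+1\) full twists.

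The main obstacle is the last claim, disjointness of the companion tori from the axis. I would trace the satellite structure through the duality. In the \(N\)-strand picture from Proposition~\ref{pro1}, the \(a\), \(b\), \(c\) parallel families each run along strands that stay adjacent throughout the braid. In the dual \(p\)-strand picture, each family occupies a block of consecutive strands: \(a\), \(b\), \(c\) of them respectively. Parallel components that are adjacent in a braid lie in a thin solid torus around a braided cable whose meridian disk meets the braid disk in an arc neighborhood, so its boundary torus avoids the axis.

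The further companion step of Proposition~\ref{pro21} merges two of these families, namely those carried by \(C_3\) and its partner. The merged tubes still form a solid torus that is a closed braid in the complement of the axis, as in Figure~\ref{Fig2} where the orange curve avoids the center. I would check carefully that the blocks corresponding to each companion component are unions of consecutive strands permuted as blocks by both factors of \(\beta_0\) and by the twists. This is the step most likely to need explicit strand tracking.
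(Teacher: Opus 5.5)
Your treatment of the equivalence and your factorization $(\sigma_{p-1}\cdots\sigma_1)^{c}(\sigma_1\cdots\sigma_{p-1})^{c}(\Delta_p^2)^{k+1}$ are fine; the equivalence cites \cite[Theorem~3.9]{de2024lorenz}, as the paper does. Your fallback ``direct isotopy'' is too vague: without the reversed factor $(\sigma_{p-1}\cdots\sigma_1)^c$ you would get the torus link $T(p,N+c)$, so the reversal is the whole content of the citation. The genuine gap is the last claim, where you misidentify the third companion torus of Lemma~\ref{lem1}. Proposition~\ref{pro21} does not merge two families. It keeps $C_1,C_2$ of $T((3,1),(4+3k,3))$, which carry the $a$- and $b$-families. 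It encloses $C_3$, which carries the $c$-family, in a larger unknotted solid torus in which $C_3$ is a closed $(k+2)$-braid. So each $c$-component has winding number $k+2$ in the third companion solid torus. In the braid $V$, which is pure, each of the last $c$ strands is a one-strand component. A thin tube around the $c$-block therefore has winding number $1$ and is a different torus. Moreover, no block made of the $c$-strands together with another family is invariant, because the $c$-block travels around the whole block of the first $a+b$ strands. What actually happens is shown in Figures~\ref{fig:Fig1} and~\ref{fig:Fig2}. The third companion torus becomes the boundary of a tube around the first $a+b$ strands, and the third companion solid torus is the exterior of that tube, which contains the axis of $V$. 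Consistently, a $c$-strand links the core of that tube $1+(k+1)=k+2$ times. The torus misses the axis but the solid torus it bounds does not, so exhibiting every companion solid torus as a tube around a block in the complement of the axis cannot work.

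In fact your picture is impossible, not merely unverified. Suppose every companion solid torus were a tube around a block of consecutive strands of $V$, disjoint from the axis. Then each strand would be parallel to the core of its tube. The linking number of two companion components would then equal that of two strands of $V$ from different blocks, which is at least $k+1\ge 2$ by $(\Delta_p^2)^{k+1}$ and positivity of $\beta_0$. This contradicts $\operatorname{lk}(K_1,K_3)=\operatorname{lk}(K_2,K_3)=1$ for the companion, computed in Proposition~\ref{pro2}. Your appeal to Figure~\ref{Fig2} conflates two axes. The orange torus there, and its analogue for $L$, avoids the axis of the many-strand braid. Passing to the $p$-strand braid $V$ replaces that axis by a different one, and the solid torus bounded by this torus ends up containing the new axis. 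To close the gap, follow the third torus through the isotopy of \cite{de2024lorenz}, or pin it down by the winding and linking computation above. Then show it becomes the torus around the first $a+b$ strands, as the paper does.
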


\begin{proof}
The equivalence between $L$ and $V$ follows from \cite[Theorem~3.9]{de2024lorenz}, obtained by applying Proposition~2.1 and the isotopy described in Figure~5 of \cite{de2024lorenz}. In particular, this shows that $L$ admits a positive braid representative with $a+b+c$ strands and $(k+1)$ full twists.

By Lemma~\ref{lem1}, the link $L$ is a braided satellite link with companion
\[
C = T((3,1),(3+2k,2)).
\]
By Lemma~\ref{lemC}, the link $C$ has three components, and hence $\partial N(C)$ consists of three tori.

From the construction of $C$ in the proofs of Propositions~\ref{pro1} and~\ref{pro21}, two of these tori lie around collections of strands corresponding to the first $a+b$ strands of the braid representing $L$. In particular, these tori are disjoint from a circle encircling these strands, and intersect the corresponding spanning disk in two simple closed curves.

The isotopies used to transform $L$ into $V$ preserve this circle and its spanning disk. Therefore, after applying these isotopies, the corresponding two tori remain disjoint from the braid axis of $V$.

The third torus is carried to a torus that remains disjoint from the braid axis and intersects the disk bounded by the axis in a single simple closed curve, corresponding to a circle encircling the first $a+b$ strands of $V$, as illustrated in Figure~\ref{fig:Fig1}.

It follows that all companion tori can be isotoped to be disjoint from the braid axis of $V$, as claimed.
\end{proof}

\begin{figure}[htbp] 
    \centering
    \begin{tikzpicture}[scale=0.7, line cap=round,line join=round]

        % Elipse externa
        \draw[line width=1.2pt] (-1,0) ellipse (6 and 2.5);

        % Elipse amarela (envolve ambas)
        \draw[line width=1.2pt, yellow!80!orange] (-2.7,0) ellipse (4 and 1.6);

        % Elipse vermelha
        \draw[line width=1.2pt, red] (-4.7,0) ellipse (1.6 and 0.8);

        % Elipse verde
        \draw[line width=1.2pt, green!60!black] (-1,0) ellipse (2.0 and 0.9);

        % Pontos na vermelha
        \fill (-5.9,0) circle (0.09);
        \fill (-5.0,0) circle (0.09);
        \fill (-4.1,0) circle (0.09);

        % Pontos na verde
        \fill (-2.3,0) circle (0.09);
        \fill (-1.5,0) circle (0.09);
        \fill (-0.6,0) circle (0.09);
        \fill (0.2,0) circle (0.09);

        % Pontos à direita
        \fill (2,0) circle (0.09);
        \fill (3,0) circle (0.09);
        \fill (4,0) circle (0.09);

    \end{tikzpicture}
    \caption{The disk bounded by the braid axis of the braid $V = V((10, \overline{3}), (10, (k+1)(10) + 3))$. The red, green, and yellow circles represent the intersections of this disk with the companion tori.}
    \label{fig:Fig1}
\end{figure}

\begin{lemma}\label{lem3}
Let $k$ be a non-negative integer. The link
\[
L = T((3,1),(3+2k,2))
\]
is equivalent to the V-link
\[
V((2,2k),(3,3)).
\]
In particular, $L$ admits a positive braid representation with a single full twist.
\end{lemma}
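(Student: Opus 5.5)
We show directly that the T-link braid for \(L\) is conjugate to the \(V\)-braid of \(V((2,2k),(3,3))\). This is an instance of \cite[Theorem~3.9]{de2024lorenz}, but we make the isotopy explicit.

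By definition,
\[
L=T((3,1),(3+2k,2))
\]
is the closure of
\[
\beta=(\sigma_1\sigma_2)(\sigma_1\cdots\sigma_{2+2k})^{2}
\]
on \(3+2k\) strands. The \(V\)-braid
\[
V((2,2k),(3,3))=\sigma_2^{2k}(\sigma_1\sigma_2)^{3}
\]
lives on three strands, and \((\sigma_1\sigma_2)^3=\Delta_3^2\). So once the equivalence is established, the ``in particular'' clause is immediate: \(L\) is the closure of the positive braid \(\sigma_2^{2k}\Delta_3^2\), which admits one full twist. The content is therefore the reduction from \(3+2k\) strands to \(3\) strands.

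\emph{Reduction.} The key tool is the standard torus-link symmetry: the closure of \((\sigma_1\cdots\sigma_{n-1})^{m}\) on \(n\) strands equals the closure of \((\sigma_1\cdots\sigma_{m-1})^{n}\) on \(m\) strands. Apply it to the factor \((\sigma_1\cdots\sigma_{2+2k})^2\), the \((2,3+2k)\) torus-link part. We realize this by the isotopy exchanging the braid axis with the core of the torus on which that part lies. The extra factor \(\sigma_1\sigma_2\) involves only the first three strands, so it can be carried along during the isotopy; this is exactly the mechanism of \cite[Proposition~2.1]{de2024lorenz}. After the exchange, \((\sigma_1\cdots\sigma_{2+2k})^2\) becomes a braid on \(2\) strands whose \(3+2k\) half-twists run around the new axis. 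The three strands entering \(\sigma_1\sigma_2\) become three strands meeting the new axis. Concretely:
\begin{itemize}
\item the \((2,3+2k)\) part rewrites, up to conjugation, as \(\sigma_2^{2k}\) times the \(2\)-strand piece \(\sigma_2^{3}\);
\item combining \(\sigma_2^{3}\) with the \(\sigma_1\sigma_2\) factor, and using braid relations to gather \(\sigma_1\sigma_2\sigma_1\sigma_2\sigma_1\sigma_2\), gives \(\sigma_2^{2k}(\sigma_1\sigma_2)^3\).
\end{itemize}

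\emph{Check of the count.} Writing \((\sigma_1\cdots\sigma_{2+2k})^2=(\sigma_1\cdots\sigma_{2+2k})(\sigma_1\cdots\sigma_{2+2k})\), conjugation and Markov destabilization remove one strand at a time. Each destabilization of the form \(\cdots\sigma_{j}\sigma_{j+1}\cdots\sigma_j\cdots\) converts a pair of crossings into a single \(\sigma\). After removing \(2k\) strands, the remaining leftover crossings are \(\sigma_2^{2k}\), and the rest of the word is \(\sigma_1\sigma_2\sigma_1\sigma_2\cdot\sigma_1\sigma_2\). The crossing count is consistent with this: \(\beta\) has \(2+2(2+2k)=6+4k\) crossings, and removing \(2k\) strands removes \(2k\) crossings, leaving \(6+2k\), which matches \(\sigma_2^{2k}(\sigma_1\sigma_2)^3\).

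\emph{Main obstacle.} The hard step is bookkeeping the exact position where the \(2k\) surviving crossings land, namely as \(\sigma_2^{2k}\) adjacent to the full twist rather than spread out. I would handle this by induction on \(k\). The case \(k=0\) is direct: \(\beta=\sigma_1\sigma_2(\sigma_1\sigma_2)^2\) is already \(\Delta_3^2\). For the inductive step, I would show that
\[
(\sigma_1\sigma_2)(\sigma_1\cdots\sigma_{4+2k})^2
\quad\text{and}\quad
\sigma_2^{2}\cdot(\sigma_1\sigma_2)(\sigma_1\cdots\sigma_{2+2k})^2
\]
have isotopic closures. This follows from two Markov destabilizations applied to the last two strands, followed by a conjugation that pushes the two new crossings next to the \(\sigma_2\)'s. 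Alternatively, one can simply cite \cite[Theorem~3.9]{de2024lorenz}, which gives the \(V\)-link form directly.
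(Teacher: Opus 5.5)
\textbf{Gaps.} Your ``Reduction'' paragraph has to be discarded, because its concrete claims are false. Exponent sum is a conjugacy invariant of $3$-braids. The word $\sigma_2^{2k}\cdot\sigma_2^{3}\cdot\sigma_1\sigma_2$ has exponent sum $2k+5$, not $2k+6$. In fact the closure of $\sigma_2^{2k+3}\sigma_1\sigma_2$ destabilizes to the two-component torus link $T(2,2k+4)$, whereas $L$ has three components. So the real argument is the induction in your last paragraph, and as set up it does not close. Write $\delta_m=\sigma_1\cdots\sigma_{m-1}$ and $\beta_k=\sigma_1\sigma_2\,\delta_{3+2k}^2$. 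Your step $\widehat{\beta_{k+1}}\cong\widehat{\sigma_2^{2}\beta_k}$ is true. However, the hypothesis $\widehat{\beta_k}\cong\widehat{\sigma_2^{2k}\Delta_3^2}$ only concerns closures. Markov moves relating $\beta_k$ to $\sigma_2^{2k}\Delta_3^2$ need not survive left multiplication by $\sigma_2^2$, so $\widehat{\sigma_2^2\beta_k}\cong\widehat{\sigma_2^{2k+2}\Delta_3^2}$ does not follow.

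\textbf{Repair.} Prove the step with an arbitrary prefix $Y\in\langle\sigma_1,\sigma_2\rangle$. Here $\sim$ denotes conjugation, possibly combined with a braid relation, and $\to$ denotes a Markov destabilization. Using $\delta_m^2=\delta_{m-1}^2\sigma_{m-1}\sigma_{m-2}$, one gets for $n\ge 3$
\[
\begin{aligned}
Y\delta_{n+2}^2&\sim\sigma_nY\delta_{n+1}^2\sigma_{n+1}\to\sigma_nY\delta_n^2\sigma_n\sigma_{n-1}\\
&\sim\sigma_{n-1}Y\delta_n^2\sigma_{n-1}\sigma_n\to\sigma_{n-1}Y\delta_n^2\sigma_{n-1}\sim\sigma_{n-1}^2Y\delta_n^2 .
\end{aligned}
\]
Next, $\sigma_j\delta_n^2=\delta_n^2\sigma_{j-2}$ for $3\le j\le n-1$, and $\sigma_j$ commutes with $Y$ for $j\ge 4$. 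When $n$ is odd, conjugation therefore slides $\sigma_{n-1}^2$ down to $\sigma_2^2$. Take $Y=\sigma_2^{2j}\sigma_1\sigma_2$ and $n=3+2k$. This gives $\widehat{\sigma_2^{2j}\beta_{k+1}}\cong\widehat{\sigma_2^{2j+2}\beta_k}$ for every $j\ge 0$, and iterating yields $\widehat{\beta_k}\cong\widehat{\sigma_2^{2k}\Delta_3^2}$.

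\textbf{Notation.} One more step is needed, because you misread the notation. By the paper's definition, $V((2,2k),(3,3))$ is the closure of $\sigma_1^{2k}(\sigma_1\sigma_2)^3$. The braid $\sigma_2^{2k}(\sigma_1\sigma_2)^3$ you wrote is $V((2,\overline{2k}),(3,3))$, which is what \cite[Theorem~3.9]{de2024lorenz} delivers. The paper's proof is exactly that citation followed by the $180^\circ$ rotation. The rotation is conjugation by $\Delta_3=\sigma_1\sigma_2\sigma_1$, which exchanges $\sigma_1$ and $\sigma_2$ and fixes the central element $\Delta_3^2$. Your ``alternatively cite'' option needs this step too.

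With these repairs, your route is a self-contained Markov-move verification of what the paper obtains by citation.
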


\begin{proof}
By \cite[Theorem 3.9]{de2024lorenz}, the link $L$ is equivalent to the V-link
\[
V((2,\overline{2k}),(3,3)).
\]
Applying the isotopy given by a $180^\circ$ rotation of the projection plane about a horizontal axis, as illustrated in \cite[Figure~5]{de2024lorenz}, transforms this diagram into
\[
V((2,2k),(3,3)).
\]
This proves the equivalence.

Since $V((2,2k),(3,3))$ is represented by a positive braid with a single full twist, the result follows.
\end{proof}

\begin{lemma}\label{lem4}
Let $a,b,c,k$ be positive integers, with at least one of $a,b,c$ greater than $1$. The link $$T((a+b+c, c), (a+b+2c+k(a+b+c), a+b+c))$$ is a satellite link with companion the V-link $$V((2, 2k), (3, 3)).$$
\end{lemma}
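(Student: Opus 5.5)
The plan is to combine Lemma~\ref{lem1} with Lemma~\ref{lem3} and then argue that being a satellite with a given companion is invariant under replacing the companion by an isotopic link. By Lemma~\ref{lem1}, the link
\[
L=T((a+b+c,c),(a+b+2c+k(a+b+c),a+b+c))
\]
is a braided satellite link with companion $C=T((3,1),(3+2k,2))$. Concretely, $L=C_P$ for a braided pattern $P=(P_1,P_2,P_3)$, at least one component of which is nontrivial. Each nontrivial component has an essential companion torus in the complement of $L$.

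By Lemma~\ref{lem3}, the link $C$ is equivalent to $C'=V((2,2k),(3,3))$. Fix an ambient isotopy, or more generally an orientation-preserving homeomorphism $h\colon S^3\to S^3$, with $h(C)=C'$. First I will note that $C$ has three components by Lemma~\ref{lemC}, so $h$ sets up a bijection between the components $C_i$ and the components $C'_i=h(C_i)$. Next I will choose tubular neighborhoods $N(C'_i)=h(N(C_i))$. Then I will define new framing maps $f'_i=h\circ f_i\colon V_i\to N(C'_i)$.

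The only point needing care is that $f'_i$ must send the preferred longitude of $V_i$ to the preferred longitude of $C'_i$. This holds because $h$ preserves linking numbers. It maps the curve $f_i(\lambda_i)$, which has linking number zero with $C_i$, to a curve with linking number zero with $C'_i$. Equivalently, $h$ maps a Seifert surface in the exterior of $C_i$ to one for $C'_i$. Consequently $C'_P=\bigcup_i f'_i(\ell_i)=h(C_P)=h(L)$, and this is isotopic to $L$.

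Finally, I check that the satellite condition transfers. The companion tori of the new structure are $h(f_i(\partial V_i))$. Since $h$ restricts to a homeomorphism $S^3\setminus\operatorname{int}N(L)\to S^3\setminus\operatorname{int}N(h(L))$, incompressibility and non-boundary-parallelism are preserved. So the tori for nontrivial pattern components remain essential; alternatively, one can reapply Lemma~\ref{lem:essential-tori} using Proposition~\ref{prop:T-links-nonsplit} and the fact that the outside of each torus still contains at least two components. The patterns are unchanged, so they remain nontrivial and braided. Hence $L$ is a satellite link with companion $V((2,2k),(3,3))$.

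I expect no substantial obstacle. The one step that needs explicit justification is the preservation of preferred longitudes, handled above via invariance of linking numbers under orientation-preserving homeomorphisms. A secondary check is that the equivalence in Lemma~\ref{lem3} is an orientation-preserving isotopy. It is, being a rotation of the projection plane combined with the isotopies of \cite{de2024lorenz}.
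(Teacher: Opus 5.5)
Your proposal is correct and follows the same route as the paper, which derives the statement directly from Lemma~\ref{lem1} and Lemma~\ref{lem3}. The only difference is that you spell out the routine step the paper leaves implicit: the satellite structure (preferred longitudes, braided patterns, essential companion tori) transfers along an orientation-preserving homeomorphism of $S^3$ carrying $T((3,1),(3+2k,2))$ to $V((2,2k),(3,3))$.
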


\begin{proof}
It follows from Lemmas~\ref{lem1} and \ref{lem3}. 
\end{proof}

\begin{proposition}\label{pro2}
Let $k$ be a non-negative integer. The link
\[
L = V((2,2k),(3,3))
\]
cannot be represented by any positive braid with more than one full twist.
\end{proposition}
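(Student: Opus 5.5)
The plan is to compare a lower bound on the number of crossings forced by full twists with an invariant of $L$ that every positive braid representative must realize exactly. For a positive braid $\beta\in B_n$ with closure $L$ and $c$ crossings, Bennequin's equality (positive braids realize the maximal self-linking number, equivalently the slice Euler characteristic) gives
\[
\chi(L)=n-c .
\]
Thus $c-n=-\chi(L)$ is independent of the chosen positive representative. If $\beta=\beta_0(\Delta_n^2)^t$ with $\beta_0$ positive, then $c\ge t\,n(n-1)$. Hence $t\,n(n-1)-n\le -\chi(L)$, which gives $t\le (n-\chi(L))/(n(n-1))$.

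First compute $\chi(L)$ from the representative $V((2,2k),(3,3))$. This is a $3$-strand positive braid with $2k$ crossings from $\sigma_2^{2k}$ and $6$ crossings from the full twist $(\sigma_1\sigma_2)^3$. So $c=2k+6$ and $-\chi(L)=2k+3$.

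Next, bound the braid index $n$ from below. Since Lemma~\ref{lemC} says $L$ has three components, every braid representative has $n\ge 3$. Morton--Franks--Williams would identify the braid index as exactly $3$, but only the lower bound is needed.

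With $t\ge 2$ we would need $2n(n-1)-n\le 2k+3$, i.e.\ $2n^2-3n\le 2k+3$. For small $k$ and $n\ge3$ this already fails. For large $k$ it does not, so the crossing count alone is insufficient. This is the main obstacle.

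To handle large $k$, the plan is to use linking numbers between components. A full twist on $n$ strands forces the linking number between components supported on $a$ and $b$ strands to be at least $t\,ab$. In $V((2,2k),(3,3))$ the components are as follows. Strands $2,3$ are interchanged or fixed by $\sigma_2^{2k}$ (pure), and the full twist is pure, so the three components correspond to the three strands, each on one strand in this representative. The $\sigma_2^{2k}$ factor adds $k$ to the linking of components $2$ and $3$. So the pairwise linking numbers are $(1,1,1+k)$.

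In any other positive representative with $t\ge 2$ full twists on $n\ge 3$ strands, each pair of components has linking number at least $2ab\ge 2$. But component $1$ links each of the others only once, a contradiction. The paper's earlier linking-number computation in Proposition~\ref{prop:T-links-nonsplit} is exactly the tool: $\operatorname{lk}(C,C')\ge t\,ab$.

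The only subtle point is to justify the linking numbers as invariants and to verify the strand permutation of $V((2,2k),(3,3))$. I would double-check the latter directly by tracking strands through the braid word, since orientation conventions in the V-link definition place $\sigma_2^{2k}$ on the last two strands.
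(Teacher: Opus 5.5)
Your decisive step is the paper's: you compute the pairwise linking numbers $(k+1,1,1)$ of the three one-strand components and observe that $r\ge 2$ full twists would force $\operatorname{lk}(C,C')\ge rab\ge 2$ for every pair, contradicting the linking number $1$. The Bennequin/Euler-characteristic detour is unnecessary and can be dropped. Your word $\sigma_2^{2k}(\sigma_1\sigma_2)^3$ is actually the braid for $V((2,\overline{2k}),(3,3))$; the definition gives $\sigma_1^{2k}(\sigma_1\sigma_2)^3$. This is harmless, because conjugating by the half twist $\sigma_1\sigma_2\sigma_1$ interchanges $\sigma_1$ and $\sigma_2$ and commutes with $(\sigma_1\sigma_2)^3$, so the closures agree and the linking numbers match up to relabeling.
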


\begin{proof} 
The link $L$ is represented as the closure of the braid
\[
(\sigma_1)^{2k}(\sigma_1\sigma_2)^3.
\]

Now observe that $(\sigma_1\sigma_2)^3=\Delta_3^2$ is the full twist on three strands, and its closure is the torus link $T(3,3)$, which has three components, each pair having linking number equal to $1$. Since $\sigma_1^{2k}$ is a pure braid, the closure of
\[
(\sigma_1)^{2k}(\sigma_1\sigma_2)^3
\]
also has three components. Moreover, each factor $\sigma_1^2$ contributes exactly $1$ to the linking number between the first and second components, and does not affect the linking numbers involving the third component. Hence the pairwise linking numbers of $L$ are
\[
\operatorname{lk}(K_1,K_2)=k+1,\qquad
\operatorname{lk}(K_1,K_3)=1,\qquad
\operatorname{lk}(K_2,K_3)=1.
\]
In particular, $L$ has a component whose linking number with each of the other two components is equal to $1$.

Suppose now that $L$ can be represented as the closure of a positive braid with more than one full twist. Then there exists a positive braid
\[
\beta=\alpha(\Delta_n^{2})^{r},\qquad r\ge 2,
\]
with $\alpha$ positive, such that $\widehat{\beta}\cong L$.

Let $C$ and $C'$ be two distinct components of $\widehat{\beta}$. Suppose that $C$ is obtained from $a\ge 1$ strands and $C'$ from $b\ge 1$ strands of the braid. Since $(\Delta_n^{2})^{r}$ is a pure braid, during the $r$ full twists each strand of $C$ crosses each strand of $C'$ exactly $2r$ times, all positively. Therefore the contribution of $(\Delta_n^{2})^{r}$ to the crossing number between $C$ and $C'$ is
\[
2r\,ab,
\]
so its contribution to the linking number is
\[
\frac12(2r\,ab)=rab.
\]
Because $r\ge 2$ and $a,b\ge 1$, we get
\[
\operatorname{lk}(C,C')\ge rab\ge 2.
\]
The remaining factor $\alpha$ is also positive, so it can only increase the linking number. Thus every pair of distinct components of the closure of $\beta$ must satisfy
\[
\operatorname{lk}(C,C')\ge 2.
\]

But this is impossible for $L$, since we already computed that $L$ has pairs of components with linking number equal to $1$. This contradiction shows that $L$ cannot be represented by any positive braid with more than one full twist.
\end{proof}

The previous results show that the links under consideration admit two complementary descriptions. On the one hand, they can be realized as satellite links with explicitly described companions. On the other hand, they admit positive braid representations whose number of full twists can be made arbitrarily large.

This leads naturally to the following question: does the existence of positive braid representations with many full twists for a satellite link impose similar constraints on its companion?

The next theorem shows that this is not the case.

\begin{named}{theorem~\ref{main}}
There exist infinitely many satellite links admitting positive braid representations with arbitrarily many full twists, whose companion links admit no positive braid representation with more than one full twist. Moreover, these braid representatives can be chosen so that the companion tori are disjoint from the braid axis.
\end{named}

\begin{proof}
By Lemma~\ref{lem4}, for positive integers $a,b,c,k$, with at least one of $a,b,c$ greater than $1$, the link
\[
L = T((a+b+c, c), (a+b+2c+k(a+b+c), a+b+c))
\]
is a braided satellite link with companion
\[
C = V((2,2k),(3,3)).
\]

By Lemma~\ref{lem2}, the link $L$ admits a positive braid representative $\beta$ on $a+b+c$ strands containing $(k+1)$ full twists. Since $k$ is arbitrary, this produces positive braid representations of $L$ with arbitrarily many full twists. Furthermore, by Lemma~\ref{lem2}, the companion tori $\partial N(C)$ are disjoint from the braid axis of $\beta$.

On the other hand, by Proposition~\ref{pro2}, the companion link
\[
C = V((2,2k),(3,3))
\]
does not admit any positive braid representation with more than one full twist.

Thus, for infinitely many choices of $k$, we obtain satellite links $L$ admitting positive braid representations with arbitrarily many full twists, while their companion links $C$ admit no positive braid representation with more than one full twist.
\end{proof}

\begin{figure}[htbp]
    \centering
    \begin{tikzpicture}[scale=0.7, line cap=round,line join=round]

        % --- REGIÃO (amarelo claro entre elipses) ---
        \fill[yellow!30, even odd rule]
            (-1,0) ellipse (6 and 2.5)
            (-2.7,0) ellipse (4 and 1.6);

        % --- PREENCHIMENTOS INTERNOS ---
        % Região vermelha (vermelho claro)
        \fill[red!25] (-4.7,0) ellipse (1.6 and 0.8);

        % Região verde (verde claro)
        \fill[green!25] (-1,0) ellipse (2.0 and 0.9);

        % --- CONTORNOS ---
        % Elipse externa
        \draw[line width=1.2pt] (-1,0) ellipse (6 and 2.5);

        % Elipse amarela (contorno mais escuro)
        \draw[line width=1.4pt, yellow!80!orange] (-2.7,0) ellipse (4 and 1.6);

        % Elipse vermelha
        \draw[line width=1.2pt, red] (-4.7,0) ellipse (1.6 and 0.8);

        % Elipse verde
        \draw[line width=1.2pt, green!60!black] (-1,0) ellipse (2.0 and 0.9);

        % --- PONTOS ---
        % Pontos na vermelha
        \fill (-5.9,0) circle (0.09);
        \fill (-5.0,0) circle (0.09);
        \fill (-4.1,0) circle (0.09);

        % Pontos na verde
        \fill (-2.3,0) circle (0.09);
        \fill (-1.5,0) circle (0.09);
        \fill (-0.6,0) circle (0.09);
        \fill (0.2,0) circle (0.09);

        % Pontos à direita
        \fill (2,0) circle (0.09);
        \fill (3,0) circle (0.09);
        \fill (4,0) circle (0.09);

    \end{tikzpicture}
    \caption{The shaded red and green disks represent meridional disks of two distinct companion solid tori, while the shaded yellow annulus corresponds to a portion of a meridional disk of the third companion solid torus.}
    \label{fig:Fig2}
\end{figure}

The preceding theorem reveals a clear discrepancy between the braid structure
of a satellite link and that of its companion. In the families constructed
here, the satellite links admit positive braid representatives with
arbitrarily many full twists that are compatible with the satellite
structure, whereas the companion links are restricted to positive braid
representations with at most one full twist.

We refer to \cite[Definition~3.3]{Ito2025} for the notion of compatibility of
a braid representative $\beta$ with the companion tori $\partial N(C)$. By
\cite[Lemma 3.1]{Ito2025}, this is equivalent to requiring that
$\partial N(C)$ be disjoint from the braid axis of $\beta$.

Notice that, in the present setting, the compatibility condition does not
need to be imposed as an additional hypothesis. Indeed, by
\cite[Theorem 5.3]{Ito2025}, if a satellite link is represented by a positive
braid containing a full twist, then the companion tori can be isotoped to be
disjoint from the braid axis.

This phenomenon can be explained geometrically by the relative position of the companion tori with respect to the braid axis. In our construction, one of the companion tori bounds a solid torus that separates the braid axis from the remaining ones. More precisely, this torus decomposes the ambient space into two solid tori: one containing the braid axis and the other containing the remaining components of the companion link, as illustrated in Figure~\ref{fig:Fig2}. When full twists are applied about the braid axis, they induce twisting only in the region containing the braid axis, affecting the components of the link contained in that solid torus. In contrast, the core of the complementary solid torus, which corresponds to a component of the companion link, remains unaffected by this twisting. As a result, the satellite link may acquire arbitrarily many full twists, while the corresponding companion component retains a rigid braid structure.

This shows that, even in the presence of compatible braid representatives, the number of full twists in a satellite link does not necessarily transfer to its companion.

This behavior contrasts with results in the literature that relate positive braid representatives of satellite links and their companions (see \cite[Theorem~4.2]{Ito2025}), and indicates that a more refined description of the companion is required.

Such a refinement is developed in the next section, where we analyze the braid structure of the companion link in greater detail.

\section{Braid Structure of the Companion}\label{S3}

The examples constructed in the previous section show that a satellite link may admit positive braid representatives with arbitrarily many full twists, while its companion link is severely restricted in this regard. This naturally raises the question of how the companion link is reflected in such braid representations.

In this section, we analyze the braid structure of the companion link in greater detail. Our goal is to understand how the presence of multiple full twists in a positive braid representative of a satellite link constrains the possible braid representations of its companion.

We show that, under these conditions, the companion link falls into one of two distinct types. In particular, either the companion inherits the full twists of the satellite link, or it admits a specific constrained form as a positive braid with a single full twist. This dichotomy provides a more precise description of the relationship between the braid structures of a satellite link and its companion.

Let $B \in B_p$ be a braid on $p$ strands. We say that the \emph{span} of $B$, denoted $\mathrm{span}(B)$, is contained in $[1,q]$ if all crossings of $B$ occur among strands indexed by $\{1,\dots,q\}$, where $q \le p$. Equivalently, $B$ can be written using only the generators $\sigma_1,\dots,\sigma_{q-1}$ and their inverses.

\begin{theorem}\label{theorem2}
Let $L$ be a satellite link with companion $C$ represented by a positive braid $\beta$ with $k$ full twists, and let $A$ be its braid axis. Let
\[
T_1,\dots,T_n \subset \partial N(C)
\]
be the companion tori associated to the companion link $C$, chosen disjoint from $A$.

Then the braid of $C$ is determined by the position of the companion tori relative to the braid axis as follows:

\begin{enumerate}
    \item If, for every companion torus $T_i$ which separates $S^3$ into two solid tori, all the other companion tori lie on the side containing the braid axis $A$, then $C$ is represented by a positive braid with $k$ full twists and with the same braid axis $A$ as $\beta$;

    \item If there exists a companion torus $T_i$ which separates $S^3$ into two solid tori such that all the other companion tori lie on the side not containing the braid axis $A$, then $C$ is represented by a positive braid with one full twist of the form
    \[
    B_0(\sigma_1 \cdots \sigma_{a-2})^{(k-1)(a-1)}
    (\sigma_1 \cdots \sigma_{a-1})^{a},
    \]
    where
    \[
    B_0(\sigma_1 \cdots \sigma_{a-2})^{(k-1)(a-1)} \in B_a
    \]
    and
    \[
    \mathrm{span}\!\left(
    B_0(\sigma_1 \cdots \sigma_{a-2})^{(k-1)(a-1)}
    \right)\subset [1,a-1].
    \]
\end{enumerate}
\end{theorem}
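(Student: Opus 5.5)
We read the hypothesis as: the satellite $L$ is the closure of the positive braid $\beta=\beta_0(\Delta_p^2)^k$ with axis $A$, and the companion tori $T_1,\dots,T_n$ lie in the complement of $A$. The plan is to work in the solid torus $V_A=S^3\setminus N(A)$, fibred by the meridional disks $D_\theta$ bounded by parallel copies of $A$. The first step is to isotope each $T_i$ inside $V_A$ so that it meets every disk $D_\theta$ transversely in circles. This is the standard normalization behind \cite[Lemma~3.1]{Ito2025} and \cite[Theorem~5.3]{Ito2025}.

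Since each $T_i$ is essential and misses $A$, it is an incompressible torus in $V_A\setminus L$. Each $T_i$ then falls into one of two types.
\begin{itemize}
\item \emph{Type (a):} $T_i$ is a braided torus, so the solid torus $W_i$ it bounds is a tubular neighbourhood of a closed braid in $V_A$, and $W_i$ does not contain $A$.
\item \emph{Type (b):} $T_i$ is parallel to $\partial V_A$. Then $T_i$ is unknotted, it separates $S^3$ into two solid tori, and the companion solid torus is the side \emph{not} containing $A$. This is the yellow torus of Figure~\ref{fig:Fig2}.
\end{itemize}

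Case (1) is when no type (b) torus cuts the other tori away from $A$. Then every companion solid torus is a braided solid torus in $V_A$, meeting each $D_\theta$ in disjoint disks. Shrink each such solid torus onto its core, keeping the cores transverse to the disks $D_\theta$. This exhibits $C$ as a closed braid with axis $A$. The full twists $(\Delta_p^2)^k$ are realized by a twist of $V_A$ along a collar of $\partial V_A$, and that collar contains every $W_i$. Coning each bundle of strands to a single strand, the image of $\Delta_p^2$ is $\Delta_m^2$ on the $m$ strands of $C$. The remaining crossings of $\beta_0$ between different bundles stay positive. We therefore get a positive braid for $C$ containing $(\Delta_m^2)^k$, with the same axis $A$.

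In case (2), some $T_i$ of type (b) has all the other companion tori on its far side. Then $C_i$ is the core of the complementary solid torus, so $C_i$ is a braid-axis-like unknot encircling all the other components of $C$. The companion tori $T_j$ with $j\ne i$ lie in a smaller solid torus $V'\subset V_A$ containing the other cores. I would apply the case (1) argument inside $V'$. This gives a positive braid on $a-1$ strands for $C\setminus C_i$ that carries the $k$ full twists $(\Delta_{a-1}^2)^k=(\sigma_1\cdots\sigma_{a-2})^{k(a-1)}$.

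Next I would reinsert $C_i$ as an $a$-th strand. $C_i$ is parallel to $\partial V_A$ and has winding number $1$ around $A$, so exactly one of the $k$ twists is absorbed to produce the full twist $(\sigma_1\cdots\sigma_{a-1})^a$ on all $a$ strands. This uses the identity
\[
\Delta_a^2=\Delta_{a-1}^2\,(\sigma_{a-1}\cdots\sigma_1\sigma_1\cdots\sigma_{a-1}),
\]
which accounts for the single loop of $C_i$ around the other strands. The other $k-1$ twists remain confined to the first $a-1$ strands. The result is the stated form $B_0(\sigma_1\cdots\sigma_{a-2})^{(k-1)(a-1)}(\sigma_1\cdots\sigma_{a-1})^a$, where the first factor has span in $[1,a-1]$.

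The main obstacle is the first step: showing that each essential $T_i$ in $V_A\setminus L$ is either braided or boundary-parallel in $V_A$, and that it meets the fibration nicely. I would first try to take this directly from \cite[Theorem~5.3]{Ito2025} together with the positivity of $\beta$. A second delicate point is that coning strand bundles preserves positivity. A negative crossing could only come from a bundle twisting against the full twist, and I would rule this out using the positivity of the linking numbers, as in Proposition~\ref{prop:T-links-nonsplit}.
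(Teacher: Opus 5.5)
Your overall structure matches the paper's. Case 1 takes the cores of braided companion tori about the same axis $A$. Case 2 discards $T_i$, runs Case 1 inside the solid torus $V'$ on the side of $T_i$ away from $A$, and then adjoins $C_i$. However, your geometric justification of that last step is wrong.

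In Case 2 the companion solid torus of $T_i$ is the side \emph{containing} $A$, because the other companion tori lie in $V'$ and companion solid tori are not nested. So your Type (b) claim, that the companion side is the one not containing $A$, is backwards exactly here. The yellow torus of Figure~\ref{fig:Fig2} contradicts it: its companion solid torus contains $A$ and the last $c$ strands.

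Consequently $C_i$ is isotopic to $A$ itself in the complement of $C\setminus C_i$. This assumes, as both you and the paper do, that $T_i$ is parallel to $\partial V_A$. Concretely, $C_i$ can be taken to be a circle in a single page $D_\theta$ enclosing $V'\cap D_\theta$, so it has winding number $0$ about $A$, not $1$. If $C_i$ really were a one-strand braid about $A$, then $C$ would be a closed $a$-braid about $A$ with all $k$ twists acting on all $a$ strands. That is Case 1, and no twist would be ``absorbed''.

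The ingredient you are missing is the standard fact that a closed braid together with its axis is a closed braid about a \emph{new} axis. If $\gamma=\gamma_0(\Delta_{a-1}^2)^k$ is the positive braid of $C\setminus C_i$ about $A$, then $C\cong\widehat{\gamma}\cup A$ is the closure of $\gamma\,\sigma_{a-1}\cdots\sigma_2\sigma_1^2\sigma_2\cdots\sigma_{a-1}\in B_a$. Only at this point does your identity $\Delta_a^2=\Delta_{a-1}^2(\sigma_{a-1}\cdots\sigma_1\sigma_1\cdots\sigma_{a-1})$ yield $\gamma_0(\Delta_{a-1}^2)^{k-1}\Delta_a^2$.

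You should also say explicitly, as the paper does, that you first delete the components of $L$ on the $A$-side of $T_i$. Deleting strands from $\beta_0(\Delta_p^2)^k$ leaves a positive braid with $k$ full twists, which is what lets Case 1 run inside $V'$.

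Separately, your positivity argument in Case 1 does not work. Coning bundles to single strands presupposes that the positive word for $\beta$ is already arranged compatibly with the braided solid tori. Positive pairwise linking numbers also give no control over the signs of individual crossings in the braid of the cores. The paper does not reprove this point. It takes Case 1 from the argument of \cite[Theorem~4.2]{Ito2025}, after normalising the tori by \cite[Theorem~5.3 and Lemma~3.1]{Ito2025}. You should cite that result rather than rely on linking numbers.
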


\begin{proof}
Let $A$ be the braid axis of the given positive braid representative $\beta$ of $L$, and let
\[
T_1,\dots,T_n \subset \partial N(C)
\]
be the family of companion tori associated to $C$, where $n \ge 1$.

By \cite[Theorem 5.3 and Lemma 3.1]{Ito2025}, after isotopy we may assume that each $T_i$ is disjoint from the braid axis $A$.

By the definition of a satellite link, for each $T_i$, all other tori in $T_1,\dots,T_n$ lie entirely in one of the two components of $S^3 \setminus T_i$.

Among the tori $T_1,\dots,T_n$, let
\[
T'_1,\dots,T'_j
\]
be those which separate $S^3$ into two solid tori. For each $T'_i$, write
\[
S^3 \setminus T'_i = V^i_1 \cup V^i_2,
\]
where $V^i_2$ is the solid torus containing the braid axis $A$. %Since the tori in $\partial N(C)$ are nested and define the satellite structure of $L$, for each such $T'_i$ all the other companion tori lie entirely in one of the two solid tori bounded by $T'_i$.

We now distinguish two cases.

\medskip
\noindent
\textbf{Case 1.} For every $T'_i$, all the remaining companion tori lie in $V^i_2$.

In this case every companion torus is nested on the same side of the braid axis. Therefore the argument of \cite[Theorem 4.2]{Ito2025} applies directly: taking the cores of the tori
\[
T_1,\dots,T_n
\]
produces a link isotopic to the companion $C$, and this link is represented by a positive braid with $k$ full twists and braid axis $A$. This gives conclusion~(1).

\medskip
\noindent
\textbf{Case 2.} There exists one $T'_i$ such that all the remaining companion tori lie in $V^i_1$.

Remove from $L$ all link components contained in $V^i_2$, and remove the torus $T'_i$ from the family
\[
T_1,\dots,T_n.
\]
Denote by $L'$ the resulting satellite link, and let $C'$ be the link given by the cores of the remaining tori. By construction, $L'$ is still represented by a positive braid with $k$ full twists.

Moreover, the choice of $T'_i$ implies that for every companion torus of $L'$, all the other companion tori lie on the side containing the braid axis. Hence Case~1 applies to $L'$. It follows that $C'$ is represented by a positive braid, say $\beta''$, with $k$ full twists.

Now view $C'$ inside the solid torus $V^i_1$. Since $T'_i=\partial V^i_1$, and $C'$ is braided in $V^i_1$, the torus $T'_i$ is boundary parallel to the boundary of a tubular neighbourhood of the braid axis of the braid representative $\beta''$ of $C'$. Equivalently, the original companion $C$ is obtained by adjoining to $C'$ the core of the solid torus bounded by $T'_i$ that contains the braid axis $A$, that is, the braid axis of $\beta''$.

Thus $C$ is the link formed by $\beta''$ together with its braid axis. Hence $C$ admits a positive braid representative of the form
\[
B_0(\sigma_1 \cdots \sigma_{a-2})^{(k-1)(a-1)}(\sigma_1 \cdots \sigma_{a-1})^{a},
\]
where
\[
B_0(\sigma_1 \cdots \sigma_{a-2})^{(k-1)(a-1)} \in B_a
\]
and
\[
\mathrm{span}\!\left(B_0(\sigma_1 \cdots \sigma_{a-2})^{(k-1)(a-1)}\right)\subset [1,a-1].
\]
This is exactly conclusion~(2).

Therefore one of the two alternatives must hold.
\end{proof}

\begin{remark}
We first observe that the conclusion of
\cite[Theorem~1.1]{Ito2025} does not hold in general without
additional geometric hypotheses.

Consider the link
\[
L=T((6,2),(8,6)),
\]
given as the closure of the braid
\[
\beta=
(\sigma_1\sigma_2\sigma_3\sigma_4\sigma_5\sigma_6\sigma_7)^6
(\sigma_1\sigma_2\sigma_3\sigma_4\sigma_5)^2.
\]

By Lemma~\ref{lem2}, taking $a=b=c=2$ and $k=0$, the link $L$
admits a positive braid representative with one full twist.

Moreover, by Proposition~\ref{pro1}, the link $L$ admits a
satellite structure with companion
\[
T((3,1),(4,3)).
\]
By Proposition~\ref{pro21}, this companion is itself a braided
satellite link with companion
\[
T((3,1),(3,2)).
\]

In the corresponding satellite structure, one of the companion tori,
say $T_3$, encloses the strands $5,6,7,8$ of $\beta$. The core of
this torus is the unknot, represented by the trivial one-strand braid
(see the proofs of Propositions~\ref{pro1} and~\ref{pro21}). The
link contained in $T_3$ is given by
\[
P_3=(\sigma_1\sigma_2\sigma_3)^2(\sigma_1)^2.
\]

Since the core of $T_3$ is the unknot, represented by the trivial
one-strand braid, the link $P_3$ defines a pattern associated to the
corresponding component $C_3$ of the companion link. Indeed,
identifying the solid torus bounded by $T_3$ with a standard solid
torus sends the preferred longitude to the preferred longitude of
$C_3$, and in this case these longitudes coincide up to isotopy.

Since the core of $T_3$ is the unknot, we have
\[
g(C_3)=0
\qquad\text{and}\qquad
b(C_3)=1,
\]
so that
\[
2g(C_3)+2b(C_3)-1=1.
\]

According to \cite[Theorem~1.1]{Ito2025}, the pattern $P_3$ should
contain at least one full twist. However, $P_3$ admits no positive
braid representative with a full twist, showing that the conclusion
of \cite[Theorem~1.1]{Ito2025} does not hold in this generality.

We conclude that the conclusion of \cite[Theorem~1.1]{Ito2025} holds under additional assumptions, namely those described in the first item of Theorem~\ref{theorem2}. More precisely, let $L$ be a satellite link represented by a positive braid with $k$ full twists and braid axis $A$, and let \[ T_1,\dots,T_n \subset \partial N(C) \] be the companion tori associated to the companion link $C$, chosen disjoint from $A$. Suppose that for every companion torus $T'_i$ that separates $S^3$ into two solid tori, all the remaining companion tori lie in the component that contains the braid axis $A$. Then the conclusion of \cite[Theorem~1.1]{Ito2025}, namely that the pattern must contain at least $2g(C_i)+2b(C_i)-1$ full twists, holds.
\end{remark}

\bibliographystyle{amsplain}  %% Uses AMS format for bibliography

%% Put all the bib entries in a file references.bib
\bibliography{Satellite}

\end{document}